\def\dd{\mathrm{\,d}} 
\newcommand\abs[1]{\left|#1\right|} 
\newcommand\norm[1]{\lVert#1\rVert} 
\newcommand\normp[2]{\left[#1\right]_{#2}} 
\DeclareMathOperator{\dist}{\mathit{d}} 
\newcommand{\Chi}{\mathcal{X}} 
\DeclareMathOperator*{\essinf}{ess\,inf}
\DeclareMathOperator*{\esssup}{ess\,sup}
\DeclareMathOperator*{\loc}{loc} 
\def\Xint#1{\mathchoice
{\XXint\displaystyle\textstyle{#1}}%
{\XXint\textstyle\scriptstyle{#1}}%
{\XXint\scriptstyle\scriptscriptstyle{#1}}%
{\XXint\scriptscriptstyle\scriptscriptstyle{#1}}%
\!\int}
\def\XXint#1#2#3{\mkern3mu{\setbox0=\hbox{$#1{#2#3}{\int}$ }
\vcenter{\hbox{$#2#3$ }}\kern-.6\wd0}}
\def\dashint{\Xint-}
\theoremstyle{plain}
\newtheorem{theorem}{Theorem}
\newtheorem{propo}[theorem]{Proposition}
\newtheorem{lemma}[theorem]{Lemma}
\newtheorem{coro}[theorem]{Corollary}
\numberwithin{theorem}{section}
\theoremstyle{remark}
\newtheorem{remark}[theorem]{Remark}
\theoremstyle{definition}
\newtheorem{dfn}[theorem]{Definition}
\begin{document}

\title{Limiting conditions of Muckenhoupt and reverse Hölder classes on metric measure spaces}
\date{\today}

\author{Emma-Karoliina Kurki}
\address[]{Aalto University, Department of Mathematics and Systems Analysis, P.O. BOX 11100, FI-00076 Aalto, Finland}
\email{emma-karoliina.kurki@aalto.fi}

\keywords{doubling metric space, annular decay, Muckenhoupt weights, reverse Hölder inequality, natural maximal function}
\subjclass[2010]{42B35, 42B25, 30L99}

\thanks{\emph{Acknowledgements.} I thank Juha Kinnunen for his help and prompts during the preparation of this article.}

\begin{abstract}
The natural maximal and minimal functions commute pointwise with the logarithm on $A_\infty$. We use this observation to characterize the spaces $A_1$ and $RH_\infty$ on metric measure spaces with a doubling measure. As the limiting cases of Muckenhoupt $A_p$ and reverse Hölder classes, respectively, their behavior is remarkably symmetric. On general metric measure spaces, an additional geometric assumption is needed in order to pass between $A_p$ and reverse Hölder descriptions. Finally, we apply the characterization to give simple proofs of several known properties of $A_1$ and $RH_\infty$, including a refined Jones factorization theorem. In addition, we show a boundedness result for the natural maximal function. 
\end{abstract}

\maketitle

\section{Introduction}

The \emph{natural maximal function} $M^\natural$ is a variant of the Hardy--Littlewood maximal function dropping the absolute value signs. As shown by Ou \cites{MR1840094,MR2407089} in $\mathbb{R}^n$, it commutes pointwise with the logarithm on $A_\infty$, and so does its close relative the \emph{natural minimal function} $m^\natural$. Starting from this observation, we describe the structure of the spaces $A_1$ and $RH_\infty$, which may be summarised as follows.
\begin{theorem}\label{thm:statement} Let $X$ be a metric space with a doubling measure and satisfying an annular decay property. Then
\begin{equation}\label{statement} 
A_1(X) = A_\infty\cap e^{BLO(X)},\quad
RH_\infty(X) = e^{BUO(X)}.
\end{equation}
\end{theorem}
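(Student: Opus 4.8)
The plan is to deduce both identities from the pointwise commutation of the natural maximal and minimal functions with the logarithm on $A_\infty$, namely $\log M^\natural w = M^\natural(\log w)$ and $\log m^\natural w = m^\natural(\log w)$, combined with the maximal-function descriptions of the four classes involved. Writing $f=\log w$ throughout, I would use that $w\in A_1$ iff $M^\natural w\le Cw$ a.e.; that $w\in RH_\infty$ iff $w\le C\,m^\natural w$ a.e., equivalently $\esssup_B w\le C\dashint_B w$ for every ball $B$; that $f\in BLO(X)$ iff $M^\natural f\le f+C$ a.e.; and that $f\in BUO(X)$ iff $f\le m^\natural f+C$ a.e. Each of these is a direct rewriting of the defining averaging or oscillation inequality, valid once the relevant suprema and infima over balls may be computed along a countable family. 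Granting this, every set equality collapses to a one-line computation after taking logarithms, and it is illuminating to separate the soft inclusions, which need only Jensen's inequality, from the hard ones, which need the reverse Jensen inequality carried by $A_\infty$.

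I would treat $RH_\infty(X)=e^{BUO(X)}$ first, as it is fully symmetric. The inclusion $e^{BUO(X)}\subseteq RH_\infty(X)$ uses only Jensen: if $\esssup_B f\le\dashint_B f+C$ then $\esssup_B e^{f}=e^{\esssup_B f}\le e^{C}e^{\dashint_B f}\le e^{C}\dashint_B e^{f}$, which is exactly the $RH_\infty$ condition. For the reverse inclusion I would invoke $RH_\infty(X)\subseteq A_\infty$, apply the minimal-function commutation to $w\le C\,m^\natural w$, and read off $f\le m^\natural f+\log C$, that is, $f\in BUO(X)$.

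The identity $A_1(X)=A_\infty\cap e^{BLO(X)}$ runs along the same lines, but now the $A_\infty$ factor earns its place. The inclusion $A_1\subseteq A_\infty\cap e^{BLO}$ is again soft: $A_1\subseteq A_\infty$ is classical, and Jensen applied to $\dashint_B w\le C\essinf_B w$ yields $\dashint_B f\le\essinf_B f+\log C$, so $f\in BLO$. For $A_\infty\cap e^{BLO}\subseteq A_1$ I would use the reverse Jensen inequality available on $A_\infty$, equivalently the maximal-function commutation: from $f\in BLO$ one gets $\dashint_B w\le C_0 e^{\dashint_B f}\le C_0 e^{\essinf_B f+C}=C_0 e^{C}\essinf_B w$, hence $w\in A_1$. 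The asymmetry with $RH_\infty$ is precisely that every $e^{BUO}$ weight already lies in $A_\infty$ by the Jensen computation above, whereas $e^{BLO}$ weights need not (take the Euclidean model $|x|^{-\lambda}$ with $\lambda$ large); the intersection with $A_\infty$ is exactly what supplies the reverse Jensen inequality that $BLO$ alone cannot guarantee.

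I expect the main obstacle to lie not in this logarithmic bookkeeping but in the commutation identities themselves, which is where the geometric hypotheses are spent. In the metric setting I anticipate annular decay being essential for establishing the pointwise commutation in the spirit of Ou: it forces the averages $r\mapsto\dashint_{B(x,r)}g$ to vary continuously, so that $M^\natural$ and $m^\natural$ are measurable, their defining suprema and infima are attained in the limit, and they may be computed along a countable dense family of balls. This same continuity is what legitimizes the translations in the first paragraph, where one must exchange the supremum or infimum over uncountably many balls with the almost-everywhere quantifier; absent annular decay, both the commutation and these translations can break down.
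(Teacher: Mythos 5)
Your four computations reproduce the paper's proof skeleton and are correct as far as they go: the soft inclusions by Jensen's inequality, the hard ones by the reverse Jensen inequality that defines $A_\infty$ (equivalently, by the commutation lemma), and the example $|x|^{-\lambda}$ with $\lambda$ large is exactly the right witness that $e^{BLO}\not\subseteq A_\infty$. The problem is your final paragraph: you have misplaced where the geometric hypothesis is spent, and this leaves a genuine gap in the one step you treat as free. The commutation inequalities require no annular decay whatsoever; in the paper (Lemma \ref{lem:commu}) they are stated for any doubling metric measure space, because they follow in a few lines from the definition \eqref{revJensen} of $A_\infty$ together with Jensen's inequality. (They are also not exact identities, only inequalities up to the additive constant $\log\normp{w}{\infty}$, but your one-sided computations tolerate this.) Likewise the characterizations of $BLO$ and $BUO$ via $M^\natural$ and $m^\natural$ (Lemma \ref{lem:chara}) need only the Lebesgue differentiation theorem, valid for doubling measures, and $M^\natural f$ is automatically lower semicontinuous --- the set $\{M^\natural f>t\}$ is a union of open balls --- so the measurability and countable-selection worries are unfounded. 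Consistently with all this, the paper proves the identity $A_1=A_\infty\cap e^{BLO}$ (Theorem \ref{thm:Aside}) with no annular decay assumption at all.

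Where annular decay is actually used is precisely the step you invoke as if it were classical: $RH_\infty(X)\subseteq A_\infty(X)$. With $A_\infty$ defined by the reverse Jensen inequality, this inclusion is the nontrivial point in metric spaces; the paper obtains it from the chain
\begin{equation*}
RH_\infty \subsetneq \bigcap_{s>1}RH_s \subset \bigcup_{s>1}RH_s = A_\infty,
\end{equation*}
where the final equality is the Kinnunen--Shukla theorem and is the only place the annular decay property enters the proof of Theorem \ref{thm:statement}. The same caveat undercuts your asymmetry remark: the Jensen computation gives $e^{BUO}\subseteq RH_\infty$, not $e^{BUO}\subseteq A_\infty$; passing to $A_\infty$ requires this same inclusion. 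So, as written, your proof of $RH_\infty\subseteq e^{BUO}$ is incomplete: its first move must be justified by the annular-decay-based equivalence of the reverse H\"older and reverse Jensen conditions (or by running the paper's direct computation from \eqref{revJensen} and \eqref{RHinfty}, which equally presupposes $w\in A_\infty$), whereas the commutation step on which you proposed to spend the hypothesis costs nothing beyond the doubling condition.
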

Here we have written $w\in e^{BLO}$ whenever $\log w\in BLO$, etc. $BLO$ and $BUO$ stand for functions of \emph{bounded lower} and \emph{upper oscillation}, respectively. By definition, $A_1$ and $RH_\infty$ are the limiting classes of Muckenhoupt $A_p$ weights for $p>1$, and of functions satisfying a reverse Hölder inequality with exponent $s>1$, respectively. While $A_1$ consists of those nonnegative functions $w\in L^1_{\loc}(X)$ such that $Mw\leq Cw$ a.e. in $X$, $RH_\infty$ may be defined by means of the minimal function as the class of those $w$ that satisfy $w\leq Cmw$ a.e. As seen in Lemma \ref{lem:chara}, the maximal and minimal functions can be used to characterize the oscillation spaces $BLO$ and $BUO$. Coupled with the aforementioned commutation result (Lemma \ref{lem:commu}), we find \eqref{statement}. 

Theorem \ref{thm:statement} generalizes Ou's Euclidean result \cite{MR2407089} to metric measure spaces. The characterization of $A_1$ was first discussed by Coifman--Rochberg \cite[Corollary 3]{MR565349}, and the characterization of $RH_\infty$ is due to Cruz-Uribe--Neugebauer \cite[Corollary 4.6]{MR1308005}. All the aforementioned authors are concerned with Euclidean spaces. The complication arising in more general metric measure spaces is that while $A_\infty$ weights do satisfy a reverse Hölder inequality, the reverse is generally not true unless we introduce an additional assumption. To this end, we invoke the so-called \emph{annular decay property,} which will be discussed in more detail in Section \ref{preli}. The principal references to the theory of Muckenhoupt weights on metric measure spaces are \cites{MR0499948,MR1791462,MR1011673}, while \cite{MR807149} is a solid classical treatment in $\mathbb{R}^n$. Cruz-Uribe--Neugebauer's aforementioned article \cite{MR1308005} is a systematic investigation of reverse Hölder classes in the Euclidean context. Some of the theory has been adapted into metric spaces by Kinnunen--Shukla in \cites{MR3265363,MR3130552}. Saito--Tanaka \cite{MR3558538} consider maximal operators and reverse Hölder classes in $\mathbb{R}^n$ with respect to a general base of open sets. Finally, we refer to \cite{MR3310925} for a visual scheme and many more classical references. 

In Section \ref{sec:x3m}, we introduce the natural maximal and minimal functions $M^\natural$ and $m^\natural$ insofar as is necessary to prove Theorem \ref{thm:statement}. In particular, we show that they commute with the logarithm on $A_\infty$, which allows us to pass between $A_p$ and $BMO$ in the proofs that follow. Section \ref{sec:bdd} is dedicated to the following boundedness result for the natural maximal function, generalizing Ou's note \cite{MR1840094} to metric measure spaces. 
\begin{theorem} Let $X$ be a metric space with a doubling measure and satisfying an annular decay property. Then, $M^\natural\mathbin{:}BMO(X) \rightarrow BLO(X)$ is bounded if and only if $M\mathbin{:}A_\infty(X) \rightarrow A_1(X)$ is. 
\end{theorem}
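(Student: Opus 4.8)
The engine of the proof is the dictionary furnished by Lemma \ref{lem:commu} and Theorem \ref{thm:statement}. Recall that for $w\in A_\infty$ one has the pointwise commutation identity $\log Mw = M^\natural(\log w)$, and that $A_1 = A_\infty\cap e^{BLO}$. The plan is to read this correspondence in both directions; the only genuine asymmetry to watch is that $\log$ carries $A_\infty$ into, but not onto, $BMO$, so that in one direction a rescaling is required.

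First I would prove that boundedness of $M^\natural\colon BMO\to BLO$ forces boundedness of $M\colon A_\infty\to A_1$. Fix $w\in A_\infty$ and set $f=\log w$, so that $f\in BMO$ with $\norm{f}_{BMO}$ controlled by the $A_\infty$ constant of $w$. The hypothesis gives $M^\natural f\in BLO$ with $\norm{M^\natural f}_{BLO}\lesssim\norm{f}_{BMO}$, and Lemma \ref{lem:commu} identifies $M^\natural f=\log Mw$; hence $Mw\in e^{BLO}$ with the corresponding oscillation bound. To upgrade this to $Mw\in A_1$ I invoke Theorem \ref{thm:statement}: since $A_1=A_\infty\cap e^{BLO}$, it remains only to know that $Mw\in A_\infty$, which holds for every $w\in A_\infty$ in a doubling space by the reverse Hölder and self-improvement properties of the maximal function, independently of the standing hypothesis. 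Tracking the constants through these steps yields a bound on the $A_1$ constant of $Mw$ in terms of the operator norm of $M^\natural$.

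For the converse, assume $M\colon A_\infty\to A_1$ is bounded and take an arbitrary $f\in BMO$. Here $f$ need not be the logarithm of an $A_\infty$ weight, so I first rescale: by the John--Nirenberg inequality there is a fixed $\lambda>0$, depending only on the John--Nirenberg constant of $X$, such that $w:=\exp(f/(\lambda\norm{f}_{BMO}))\in A_\infty$ with a uniformly bounded $A_\infty$ constant. Applying the hypothesis, $Mw\in A_1$, whence $\log Mw\in BLO$ by Theorem \ref{thm:statement}. Now Lemma \ref{lem:commu} and the positive homogeneity of $M^\natural$ give $M^\natural f=\lambda\norm{f}_{BMO}\,\log Mw\in BLO$, and unwinding the normalisation produces $\norm{M^\natural f}_{BLO}\lesssim\norm{f}_{BMO}$, as required.

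The main obstacle is quantitative rather than structural: one must carry the constants faithfully through the exponential/logarithm correspondence so that a uniform operator bound in one setting returns a uniform operator bound in the other, and in particular control the $A_\infty$ constant produced by John--Nirenberg uniformly in $f$. The subtler point is the forward direction's reliance on $Mw\in A_\infty$: the characterization $A_1=A_\infty\cap e^{BLO}$ genuinely requires both halves, since $e^{BLO}\not\subset A_\infty$ in general, so this membership must be secured separately. Throughout, the annular decay hypothesis enters only through Theorem \ref{thm:statement} and Lemma \ref{lem:commu}, on which the entire argument rests.
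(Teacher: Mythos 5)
Your proposal follows essentially the same route as the paper: one direction is the paper's Proposition \ref{propo:bounded} (John--Nirenberg to produce a weight with uniform $A_\infty$ constant, then commutation and the hypothesis on $M$), and the other is the final proposition of Section \ref{sec:bdd}, which you streamline by invoking the characterization $A_1=A_\infty\cap e^{BLO}$ (Theorem \ref{thm:Aside}) where the paper instead unwinds that characterization by hand via Lemma \ref{lem:chara} to reach $M(Mw)\leq CMw$ directly; since Theorem \ref{thm:Aside} is proved independently of Section \ref{sec:bdd}, there is no circularity in your version. You also correctly isolate the one point the logarithmic dictionary does not give for free, namely that $Mw\in A_\infty$ must be secured separately --- this is exactly the role of Lemma \ref{lem:mainfty} in the paper.

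Two corrections are needed, one mathematical and one attributional. First, Lemma \ref{lem:commu} is \emph{not} an identity: it gives the two-sided bound $0\leq \log Mw - M^\natural(\log w)\leq \log\normp{w}{\infty}$, so your equations ``$M^\natural f=\log Mw$'' and ``$M^\natural f=\lambda\norm{f}_{BMO}\log Mw$'' are false pointwise. The repair is short but must be stated: $BLO$ absorbs bounded additive perturbations, since
\begin{equation*}
\norm{g+h}_{BLO}\leq \norm{g}_{BLO}+2\norm{h}_\infty,
\end{equation*}
so $\log Mw$ and $M^\natural(\log w)$ belong to $BLO$ simultaneously, with norms differing by at most $2\log\normp{w}{\infty}$ --- a quantity that is uniformly controlled in both of your directions (by the standing bound on $\normp{w}{\infty}$ in the first, and by the John--Nirenberg normalization in the second), so the quantitative conclusions survive. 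Second, your closing claim that annular decay enters ``only through Theorem \ref{thm:statement} and Lemma \ref{lem:commu}'' is off: Lemma \ref{lem:commu} requires only the doubling property. The annular decay hypothesis is consumed precisely where you wrote ``in a doubling space'', namely in guaranteeing $M(A_\infty)\subset A_\infty$ (Lemma \ref{lem:mainfty}), which rests on the equivalence between $A_\infty$ and the reverse H\"older classes --- an equivalence that can fail without annular decay.
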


Section \ref{sec:chara} contains the proof of Theorem \ref{thm:statement}, whose general outline was given above. 
In the final section, we demonstrate how a number of known properties of $A_1$ and $RH_\infty$ follow all but immediately from Theorem \ref{thm:statement}. These were already established by Ou \cite{MR2407089} in Euclidean spaces. Provided that the space $X$ satisfies an annular decay property (or, indeed, any sufficient assumption yet to be determined; see the discussion in Section \ref{preli}), the metric-space proofs are the same. This is hardly surprising, because in either case the logarithmic formalism provided by Theorem \ref{thm:statement} reduces arguments into arithmetic with exponents on the oscillation side. The reader may observe that the simplicity of Theorem \ref{thm:statement} is somewhat deceptive, since its level of abstraction requires a degree of regularity of the underlying space.

Notably, the characterization yields a proof of Cruz-Uribe--Neugebauer's refined Jones factorization theorem \cite[Theorem 5.1]{MR1308005}, that includes information on the reverse Hölder classes. 
\begin{theorem}
Let $X$ be a metric space with a doubling measure and satisfying an annular decay property, and $1<p,s<\infty$. The weight $w\in A_p(X)\cap RH_s(X)$ if and only if $w=w_1w_2$ for some $w_1\in A_1(X)\cap RH_s(X)$, $w_2\in A_p(X)\cap RH_\infty(X)$. 
\end{theorem}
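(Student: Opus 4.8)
The plan is to reduce the refined factorization to the classical Jones factorization theorem by means of the elementary power substitution $v\mapsto v^s$, and then to read off the two reverse Hölder refinements from the characterizations in Theorem \ref{thm:statement}. The organizing identity is the change of variables
\[
w\in A_p(X)\cap RH_s(X)\iff w^s\in A_q(X),\qquad q:=s(p-1)+1,
\]
which I would establish directly from the definitions: the forward implication follows by inserting the reverse Hölder bound $\dashint_B w^s\le C(\dashint_B w)^s$ into the $A_q$ quotient, and the converse by Jensen's inequality $\dashint_B w\le(\dashint_B w^s)^{1/s}$ together with the trivial Hölder lower bound $(\dashint_B w)(\dashint_B w^{-1/(p-1)})^{p-1}\ge 1$. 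In particular the endpoint $p=1$ reads $w\in A_1(X)\cap RH_s(X)\iff w^s\in A_1(X)$, which I will use repeatedly. Throughout I also use the duality $v\in A_p\iff v^{1-p'}\in A_{p'}$ with $p'=p/(p-1)$, that $A_1$ is stable under powers in $(0,1]$, and that $BUO(X)=-BLO(X)$ with both cones stable under positive dilations.

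For necessity, suppose $w\in A_p(X)\cap RH_s(X)$, so that $w^s\in A_q(X)$. The classical Jones factorization theorem, which holds on doubling metric measure spaces, furnishes $U,V\in A_1(X)$ with $w^s=UV^{1-q}$. Taking $s$-th roots and using $(1-q)/s=1-p$ yields the candidate
\[
w=U^{1/s}\,V^{1-p}=:w_1w_2 .
\]
Here $w_1=U^{1/s}\in A_1(X)$ since $A_1$ is closed under the power $1/s\le 1$, and $w_1^s=U\in A_1(X)$ forces $w_1\in RH_s(X)$ by the endpoint change of variables; thus $w_1\in A_1(X)\cap RH_s(X)$. For $w_2=V^{1-p}$, duality gives $w_2^{1-p'}=V\in A_1(X)\subset A_{p'}(X)$, so $w_2\in A_p(X)$. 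To see that $w_2\in RH_\infty(X)$ I invoke Theorem \ref{thm:statement}: since $V\in A_1(X)$ we have $\log V\in BLO(X)$, whence $\log w_2=(1-p)\log V\in BUO(X)$, and therefore $w_2\in e^{BUO(X)}=RH_\infty(X)$. This gives $w_2\in A_p(X)\cap RH_\infty(X)$, as required.

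For sufficiency I run the same correspondence backwards. Given $w_1\in A_1(X)\cap RH_s(X)$, the endpoint change of variables gives $U:=w_1^s\in A_1(X)$. Given $w_2\in A_p(X)\cap RH_\infty(X)$, Theorem \ref{thm:statement} gives $\log w_2\in BUO(X)$, hence $\log V\in BLO(X)$ for $V:=w_2^{-1/(p-1)}$; since duality also gives $V\in A_{p'}(X)\subset A_\infty$, the characterization $A_1=A_\infty\cap e^{BLO}$ yields $V\in A_1(X)$. Then $w^s=w_1^sw_2^s=UV^{1-q}$, which lies in $A_q(X)$ by the sufficiency direction of classical Jones factorization, and the change of variables returns $w\in A_p(X)\cap RH_s(X)$. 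The only nonelementary inputs are thus classical Jones factorization and the two equivalences of Theorem \ref{thm:statement}; I expect the main point requiring care to be the bookkeeping that the substitution $w\mapsto w^s$ intertwines the four weight classes exactly as the exponent $q=s(p-1)+1$ demands, and the verification that both Theorem \ref{thm:statement} and Jones factorization are genuinely at our disposal in the doubling metric setting. It is precisely here, through the reverse Hölder inequality underlying Theorem \ref{thm:statement}, that the annular decay property enters.
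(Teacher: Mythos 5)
Your proof is correct and follows essentially the same route as the paper's: the power substitution $w\mapsto w^s$ with $q=s(p-1)+1$ (Proposition \ref{propo:s}), classical Jones factorization of $w^s$, the endpoint case $p=1$ for $w_1$, and for $w_2$ the equivalence $V\in A_1 \iff V^{1-p}\in A_p\cap RH_\infty$, which the paper invokes as Proposition \ref{propo:mult} and you re-derive inline from Theorem \ref{thm:statement} by exactly the argument given in that proposition's proof. The only cosmetic differences are that you split the chain of equivalences into separate necessity and sufficiency passes and sketch a direct verification of the change-of-variables lemma, which the paper simply cites.
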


\section{Notation and preliminaries}\label{preli}

Throughout, we consider a complete metric measure space $\left(X,\dist,\mu\right)$, where the Borel measure $\mu$ satisfies the doubling condition: there exists a constant $C_d=C_d(\mu) > 1$ such that 
$$
0<\mu\left(2 B\right) \leq C_{d} \mu\left( B \right) < \infty 
$$
for all balls $B\subset X$. The doubling condition implies that $X$ is separable, and since $X$ is complete, it is proper \cite[Proposition 3.1]{MR2867756}. An open ball with center $x\in X$ and radius $r>0$ is denoted $B = B(x, r)$, where the center and radius are left out when not of interest. We occasionally use the notation $cB = B(x, cr)$ for the ball dilated by a constant $c>0$. Various constants are denoted by the letter $C$, whose dependence on parameters is indicated in parentheses when relevant. 

Whenever $E\subset X$ is a measurable subset and the function $f$ is integrable on every compact subset of $E$, we say that $f$ is locally integrable on $E$, denoted $f\in L^1_{\loc}(E)$. If the measure $\nu$ is absolutely continuous with respect to $\mu$ and if there exists a nonnegative locally integrable function $w$ such that $d\nu=w\dd\mu$, we call $\nu$ a weighted measure with respect to $\mu$, and $w$ a weight, following \cite[p.~1]{MR1011673}. For any measurable subset $F\subset E$ and weight $w$ on $E$, we write $w(F)=\int_F w \dd \mu$. The integral average of a function $f\in L^1(E)$ over a measurable set $F\subset E$, with $0<\mu(F)<\infty$, is abbreviated $\mu(F)^{-1}\int_F f\dd\mu = \dashint_F f\dd\mu.$

\begin{dfn} For $1<p<\infty$, a nonnegative function $w\in L^1_{\loc}(X)$ that satisfies 
\begin{equation}\label{dfn:Ap}
\normp{w}{p} = \sup_{B\subset X}\left(\dashint_{B}w\dd\mu \right)\left(\dashint_B w^{-\frac{1}{p-1}}\dd\mu\right)^{p-1} < \infty
\end{equation}
is called a Muckenhoupt $A_p(X)$ weight. When $p=1$, we require instead that there exist a constant $\normp{w}{1}<\infty$ such that for every ball $B\subset X$
\begin{equation}\label{dfn:A1}
\dashint_{B}w\dd\mu \leq \normp{w}{1}\essinf_{x\in B} w(x).
\end{equation}
In both cases, the constant denoted $\normp{w}{p}$ is called the $A_p$ constant of $w$. 
\end{dfn}

As for the class $A_\infty$, we choose the following definition, sometimes called the reverse Jensen inequality. 
\begin{dfn} A nonnegative function $w\in L^1_{\loc}(X)$ belongs to the class $A_\infty(X)$ if 
\begin{equation}\label{revJensen}
\normp{w}{\infty} = \sup_{\substack{B\subset X\\B\ni x}} \left(\dashint_B w\dd\mu\right) \exp\left(-\dashint_B \log w\dd\mu \right) < \infty.
\end{equation}
Again, the constant denoted $\normp{w}{\infty}$ is called the $A_\infty$ constant of $w$. 
\end{dfn}

\begin{dfn} For a $f\in L^1_{\loc}(X)$ the Hardy--Littlewood maximal and Cruz-Uribe--Neugebauer  minimal functions, respectively, are defined by
\begin{equation*}
Mf(x)=\sup_{B\ni x}\dashint_B\abs{f}\dd\mu, \quad mf(x)=\inf_{B\ni x}\dashint_B\abs{f}\dd\mu,
\end{equation*}
where the supremum (infimum) is taken over all balls $B\subset X$ containing the point $x$. 
\end{dfn} 
As is well known, $A_p(X)$ weights are precisely those for which the maximal function is bounded on the weighted space $L^p(X; w\dd\mu)$ \cite[Theorem I.9]{MR1011673}. The minimal function was introduced by Cruz-Uribe and Neugebauer in \cite{MR1308005} to illuminate the structure of the reverse Hölder classes, which will be introduced shortly. It suits our purpose to express the $A_1$ condition \eqref{dfn:A1} in terms of the maximal function:   
\begin{equation}\label{dfn:A1max}
w\in A_1(X) \quad\text{if and only if}\quad Mw(x) \leq \normp{w}{1}w(x)\quad \text{for a.e. }x\in X. 
\end{equation}

\begin{dfn} Let $1<s<\infty$. The nonnegative function $w\in L^1_{\loc}(X)$ belongs to the reverse H\"older class $RH_s(X)$, if there exists a constant $C<\infty$ such that for every ball $B\subset X$
\begin{equation}\label{RHp}
\left(\dashint_B w^s\dd\mu \right)^\frac{1}{s} \leq C\dashint_B w\dd\mu.
\end{equation}
Furthermore, $w$ belongs to the class $RH_\infty(X)$ if there exists a constant $C<\infty$ such that for every ball $B\subset X$
\begin{equation*}
\esssup_{x\in B}w(x)\leq C\dashint_B w\dd\mu
\end{equation*}
or, equivalently, there exists a constant $C<\infty$ such that for almost every $x\in X$
\begin{equation}\label{RHinfty}
w(x)\leq Cmw(x).
\end{equation}
\end{dfn}
The above definitions neatly reflect the symmetrical nature of $A_1$ and $RH_\infty$ as the limiting classes of $A_p$, $1<p<\infty$, and $RH_s$, $1<s<\infty$, as well as the role of the extremal functions with respect to these classes. This will be the subject of Section \ref{sec:chara}. For now, we remark that 
$$A_1 \subsetneq \bigcap_{p>1}A_p,\quad RH_\infty \subsetneq \bigcap_{s>1}RH_s,$$
with a proper inclusion in each case. For discussion and examples see \cite[Lemma 2.3]{MR990859} and \cite[Section 4]{MR1308005}. 

In Euclidean spaces it is well known that the class $A_\infty$ can be characterized in a number of equivalent ways, of which the reverse Hölder \eqref{RHp} and reverse Jensen \eqref{revJensen} inequalities are two. In other words, a weight $w$ belongs to $A_\infty$ if and only if it satisfies a reverse Hölder inequality for some $s>1$. In metric measure spaces, however, this is not universally true. Kinnunen and Shukla \cites{MR3265363,shukla_thesis} give a detailed presentation of the metric-space theory, assuming the so-called $\alpha$-annular decay property. 
\begin{dfn} A metric measure space $(X,d,\mu)$ with a doubling measure $\mu$ is said to satisfy the $\alpha$-annular decay property with $0\leq \alpha\leq 1$, if there exists $C\geq 1$ such that for every $x\in X$, $r>0$, and $0<\delta<1$
\begin{equation}\label{annular}
\mu\left(B(x,r)\setminus B(x, (1-\delta)r)\right) \leq C\delta^\alpha\mu(B(x,r)).
\end{equation}
The constant $C$ is independent of the point, radius, and $\delta$. 
\end{dfn}
Whenever the exact parameter $\alpha$ is not of interest, we say that a space satisfies an annular decay property if it satisfies the $\alpha$-annular decay property for some $0\leq \alpha\leq 1$.

In \cites{MR3265363,shukla_thesis} the annular decay property guarantees that when the underlying measure is doubling, any weighted measure comparable to it is doubling as well. This implication, in turn, is needed in order to pass from qualitative to quantitative characterization of $A_\infty$, and ultimately to the reverse Hölder inequality. It follows that the comparability of measures, reverse Jensen, and reverse Hölder inequalities can be taken as equivalent characterizations of the class $A_\infty$, just as in the Euclidean case. 

Early publications involving the annular decay property or a slight variant are \cites{MR1724375,MR1488297,MR850408}. 
It is well known to hold true for a fairly large class of spaces, including all length spaces; see e.g. \cites{MR1724375,MR3110585}. Strömberg and Torchinsky \cite{MR1011673} develop the Muckenhoupt--reverse Hölder theory in metric measure spaces under the assumption that the measure of a ball depend continuously on its radius. We wish to emphasize that the annular decay property, although sufficient, might not be necessary for our purposes. For convenience, we adopt the assumption \eqref{annular} wherever a reverse Hölder inequality is needed. 

In \cite[Propositions 4.2 and 4.4]{MR2407089}, the logarithmic characterization to be introduced in Section \ref{sec:chara} is used to show the following lemma. In fact, one readily obtains a quantitative result starting from the definitions of the function spaces involved. See also \cite[Lemma 2.2]{MR990859}. 
\begin{lemma}\label{lem:betterthan}Let $X$ be a metric space with a doubling measure and satisfying an annular decay property. 
\begin{enumerate}[label=\normalfont{(\roman*)}]
\item\label{lemome} If $w, w^{-1} \in A_1(X)$, then $w\in L^\infty_{\loc}(X)$;
\item\label{lemtwo} If $w, w^{-1} \in RH_\infty(X)$, then $w\in L^\infty_{\loc}(X)$. 
\end{enumerate} 
\end{lemma}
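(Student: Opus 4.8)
The plan is to argue directly from the definitions of the weight classes, without appealing to the logarithmic characterization, which is not yet established at this point in the paper. In both parts it suffices to show that $\esssup_B w<\infty$ for every ball $B\subset X$: a compact set is bounded, hence contained in a single ball, so a finite essential supremum on each ball already gives $w\in L^\infty_{\loc}(X)$. Throughout I will use that $w$ and $w^{-1}$ are positive and finite almost everywhere and locally integrable, so that every average $\dashint_B w\dd\mu$ and $\dashint_B w^{-1}\dd\mu$ is strictly positive and finite, together with the elementary identity $\essinf_B w^{-1}=(\esssup_B w)^{-1}$.

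For part \ref{lemome} the point is that the $A_1$ condition bounds the essential infimum from below, so it is the reciprocal hypothesis $w^{-1}\in A_1$ that controls $w$ from above. Applying \eqref{dfn:A1} to $w^{-1}$ on a ball $B$ gives
\[
\dashint_B w^{-1}\dd\mu\leq\normp{w^{-1}}{1}\,\essinf_{x\in B}w^{-1}(x)=\frac{\normp{w^{-1}}{1}}{\esssup_{x\in B}w(x)},
\]
and since the left-hand average is positive and finite, rearranging yields $\esssup_B w\leq\normp{w^{-1}}{1}\left(\dashint_B w^{-1}\dd\mu\right)^{-1}<\infty$. If one wishes to display the symmetry with the hypothesis $w\in A_1$, combining this estimate with \eqref{dfn:A1} for $w$ and Jensen's inequality $\dashint_B w^{-1}\dd\mu\geq\left(\dashint_B w\dd\mu\right)^{-1}$ upgrades the conclusion to $\esssup_B w\leq\normp{w}{1}\normp{w^{-1}}{1}\essinf_B w$, so that $w$ in fact oscillates boundedly on every ball; local boundedness follows a fortiori.

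For part \ref{lemtwo} the situation is dual: $RH_\infty$ bounds the essential supremum directly, and the conclusion follows from $w\in RH_\infty$ alone. Indeed the defining inequality (in its $\esssup$ form, equivalently \eqref{RHinfty}) reads $\esssup_B w\leq C\dashint_B w\dd\mu$, which is finite for every ball because $w\in L^1_{\loc}(X)$. The complementary hypothesis $w^{-1}\in RH_\infty$ furnishes the matching lower bound $\essinf_B w\geq C^{-1}\dashint_B w\dd\mu>0$; this is not needed for the stated conclusion but makes the parallel with part \ref{lemome} transparent, $A_1$ controlling infima and $RH_\infty$ controlling suprema.

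The argument is short, and there is no genuine obstacle; the only points needing care are the identities relating the essential extrema of $w$ and $w^{-1}$ and the verification that no average degenerates, followed by the routine passage from ball-wise bounds to $L^\infty_{\loc}(X)$. I note that this direct route invokes neither the annular decay property nor any reverse Hölder inequality, in keeping with the observation that the lemma is a purely quantitative consequence of the definitions.
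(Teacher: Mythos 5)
Your proof is correct for the stated conclusion. Part \ref{lemome} is essentially the paper's own argument: the $A_1$ condition for $w^{-1}$ rearranges to $\esssup_B w\leq \normp{w^{-1}}{1}\left(\dashint_B w^{-1}\dd\mu\right)^{-1}<\infty$, and combining with Jensen's inequality and the $A_1$ condition for $w$ gives the Harnack-type bound $\esssup_B w\leq \normp{w}{1}\normp{w^{-1}}{1}\essinf_B w$ (the paper writes $\normp{w}{1}^2$, using one symbol for both constants). Part \ref{lemtwo} is where you genuinely depart from the paper, and your route is more elementary: the $\esssup$ form of the $RH_\infty$ definition gives $\esssup_B w\leq C\dashint_B w\dd\mu<\infty$ on every ball, since $w\in L^1_{\loc}(X)$ and closed balls are compact in a proper space, so local boundedness follows from $w\in RH_\infty$ alone --- neither the hypothesis on $w^{-1}$ nor the annular decay property enters. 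The paper instead uses annular decay to place $w$ and $w^{-1}$ in a common $A_p$ class and chains the $RH_\infty$ and $A_p$ conditions with Jensen's inequality to obtain the full Harnack inequality $\esssup_B w\leq C\essinf_B w$, i.e.\ a two-sided bound $0<c_1\leq w\leq c_2<\infty$ on each ball. So the paper proves strictly more than the lemma states, and that extra strength is what consumes the extra hypotheses; your argument isolates exactly what the stated $L^\infty_{\loc}$ conclusion requires.

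One side remark in your part \ref{lemtwo} is false, however, and worth flagging even though you correctly note it is not needed for the proof: $w^{-1}\in RH_\infty$ does \emph{not} furnish the bound $\essinf_B w\geq C^{-1}\dashint_B w\dd\mu$. Taking reciprocals in $\esssup_B w^{-1}\leq C\dashint_B w^{-1}\dd\mu$ gives $\essinf_B w\geq C^{-1}\left(\dashint_B w^{-1}\dd\mu\right)^{-1}$, the reciprocal of the harmonic mean of $w$, and Jensen's inequality $\left(\dashint_B w^{-1}\dd\mu\right)^{-1}\leq\dashint_B w\dd\mu$ points the wrong way to upgrade this to your claim. Indeed, the inequality $\dashint_B w\dd\mu\leq C\essinf_B w$ for all balls is precisely the $A_1$ condition for $w$, which does not follow from $w^{-1}\in RH_\infty$ alone: on $\R$ with Lebesgue measure, the weight $w=\min\left(1,\abs{x}^{-1}\right)$ has $w^{-1}=\max\left(1,\abs{x}\right)\in RH_\infty$, yet on $B=(-R,R)$ one computes $\essinf_B w=R^{-1}$ while $\dashint_B w\dd\mu=(1+\log R)/R$, so the ratio is unbounded. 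This failure is exactly why the paper routes the lower bound through the $A_p$ condition (and hence through annular decay) rather than through the $RH_\infty$ condition for $w^{-1}$ directly; your actual proof of the stated conclusion is unaffected.
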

\begin{proof}\ref{lemome} Let $B\subset X$ be a ball. Using the definition \eqref{dfn:A1} for $w^{-1}$, $$ \dashint_{B}\frac{1}{w}\dd\mu \leq \normp{w}{1}\essinf_{x\in B} \frac{1}{w(x)} = \frac{\normp{w}{1}}{\esssup_B w}.$$
Applying Jensen's inequality and the fact that $w\in A_1$ as well, we conclude that
$$
\esssup_{x\in B} w(x) \leq \normp{w}{1} \left(\dashint_B\frac{1}{w}\dd\mu \right)^{-1} \leq \normp{w}{1}\dashint_B w\dd\mu \leq \normp{w}{1}^2 \essinf_{x\in B} w(x).
$$
This is a Harnack inequality for $w$, guaranteeing that there exist constants $c_1, c_2$ such that $0<c_1\leq w\leq c_2<\infty$ almost everywhere in $B\subset X$ and, in particular, $w\in L^\infty_{\loc}(X)$.

\ref{lemtwo} Under the annular decay assumption it holds that $w, w^{-1}\in A_\infty$, that is, there exist $p,q>1$ such that $w\in A_p$ and $w^{-1}\in A_q$. Without loss of generality we may assume $q=p$. Applying first the $RH_\infty$ \eqref{RHinfty} and then the $A_p$ \eqref{dfn:Ap} condition for $w^{-1}$ we have 
\begin{align*}
\esssup_{x\in B} \frac{1}{w(x)} &\leq C \dashint_B \frac{1}{w}\dd\mu \leq C\normp{w^{-1}}{p}\left(\dashint_B\left(\frac{1}{w}\right)^\frac{1}{1-p}\dd\mu\right)^{1-p}\\
& = C\normp{w^{-1}}{p}\left(\dashint_Bw^\frac{1}{p-1}\dd\mu\right)^{1-p}, 
\end{align*}
which is equivalent to 
\begin{equation*}
\essinf_B w \geq C\normp{w^{-1}}{p}^{-1}\left(\dashint_Bw^\frac{1}{p-1}\dd\mu\right)^{p-1}. 
\end{equation*}
Combining this observation with the $RH_\infty$ and $A_p$ conditions for $w$, and Jensen's inequality, we again obtain 
\begin{align*}
\esssup_B w & \leq C \dashint_B w\dd\mu \leq C\normp{w}{p}\left(\dashint_Bw^\frac{1}{1-p}\dd\mu\right)^{1-p}\\
& \leq C\left(\dashint_Bw^\frac{1}{p-1}\dd\mu\right)^{p-1} \leq C \essinf_B w.
\end{align*}
\end{proof}

Finally, we define the oscillation spaces that we need.  
\begin{dfn}A function $f\in L^1_{\loc}(X)$ is said to be of bounded mean oscillation, denoted $f\in BMO$, when
$$\norm{f}_{BMO} = \sup_{B\subset X}\left(\dashint_B \abs{f - f_B}\dd\mu \right) < \infty.$$ 
Functions of bounded upper and lower oscillation ($BUO$ and $BLO$), respectively, are defined as those functions $f\in L^1_{\loc}(X)$ for which
\begin{gather*}
\norm{f}_{BUO} = \sup_{B\subset X}\left(\esssup_{x\in B} f(x) - \dashint_B f\dd\mu \right) < \infty,\\ 
\norm{f}_{BLO} = \sup_{B\subset X}\left(\dashint_B f\dd\mu - \essinf_{x\in B} f(x)\right) <\infty.
\end{gather*}
\end{dfn}
In spite of the notation, none of the above functionals are norms: consider $\norm{\cdot}_{BMO}$ of almost everywhere constant functions, and the fact that $\norm{-f}_{BUO} = \norm{f}_{BLO}$. We also remark that $BUO$ and $BLO$ are subsets of $BMO$.

\section{The natural extremal functions}\label{sec:x3m}

The natural maximal and minimal functions were studied by Ou in \cites{MR1840094,MR2407089}. In this section we introduce these functions and show that they commute with the logarithm on $A_\infty$. Furthermore, we show how the natural extremal functions may be used to characterize the oscillation spaces $BUO$ and $BLO$. As simple as these observations are, they will unlock an elegant formalism in Section \ref{sec:chara}.

\begin{dfn}For a function $f\in L^1_{\loc}(X)$ its natural maximal and minimal functions, respectively, are given by
\begin{equation*}
M^\natural f(x) = \sup_{B\ni x}\dashint_B f\dd\mu, \quad
m^\natural f(x) = \inf_{B\ni x}\dashint_B f\dd\mu,
\end{equation*}
where the supremum (infimum) is taken over all balls $B\subset X$ containing the point $x$. 
\end{dfn}
As a matter of fact we have $M^\natural f(x) = -m^\natural(-f)(x)$. 

\begin{lemma}[\cite{MR1840094}]\label{lem:commu} Let $X$ be a metric space with a doubling measure, and $w\in A_\infty(X)$. We have
\begin{gather*}
0 \leq \left(\log M^\natural - M^\natural \log\right)w \leq \log \normp{w}{\infty}, \\
0 \leq \left(\log m^\natural - m^\natural \log\right)w \leq \log \normp{w}{\infty}.
\end{gather*}
\end{lemma}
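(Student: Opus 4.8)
The plan is to reduce the statement to a pair of two-sided inequalities that hold on each individual ball, and then to pass to the supremum (respectively infimum) over balls $B\ni x$, exploiting the fact that the logarithm, being continuous and strictly increasing, commutes with both $\sup$ and $\inf$. Concretely, for a fixed ball $B$ write $a_B = \dashint_B w\dd\mu$ for the arithmetic mean, and observe that
$$\log M^\natural w(x) = \sup_{B\ni x}\log a_B, \qquad M^\natural\log w(x) = \sup_{B\ni x}\dashint_B\log w\dd\mu,$$
with the analogous identities using infima for $m^\natural$. Thus both assertions follow once I control $\log a_B$ from below and above by $\dashint_B\log w\dd\mu$, uniformly in $B$; said differently, I need the chain ``geometric mean $\leq$ arithmetic mean $\leq \normp{w}{\infty}\cdot$ geometric mean'' on every ball.

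The lower estimate is Jensen's inequality applied to the concave function $\log$: for every ball $B$,
$$\dashint_B \log w\dd\mu \leq \log\left(\dashint_B w\dd\mu\right) = \log a_B.$$
The upper estimate is precisely the defining reverse Jensen inequality \eqref{revJensen} for $A_\infty$: since $a_B\exp(-\dashint_B\log w\dd\mu)\leq \normp{w}{\infty}$, taking logarithms gives
$$\log a_B \leq \log\normp{w}{\infty} + \dashint_B \log w\dd\mu.$$
Here $w>0$ almost everywhere and $\log w\in L^1_{\loc}(X)$, since otherwise $\dashint_B\log w\dd\mu=-\infty$ on some ball and the $A_\infty$ constant would be infinite; hence all averages above are finite and the manipulations are legitimate. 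Combining the two displays yields, for each $B$,
$$\dashint_B \log w\dd\mu \leq \log a_B \leq \log\normp{w}{\infty} + \dashint_B \log w\dd\mu.$$

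It remains to take suprema and infima over all balls $B\ni x$. Taking the supremum in the left-hand inequality gives $M^\natural\log w(x)\leq \log M^\natural w(x)$, the nonnegativity of $(\log M^\natural - M^\natural\log)w$; taking the supremum in the right-hand inequality, and noting that the additive constant $\log\normp{w}{\infty}$ passes through the supremum unchanged, gives $\log M^\natural w(x)\leq \log\normp{w}{\infty} + M^\natural\log w(x)$, the upper bound. The statement for $m^\natural$ follows identically upon taking infima, using that if $\phi(B)\leq\psi(B)$ for all $B$ then $\inf_B\phi\leq\inf_B\psi$, together with the same passage of the constant through the infimum. No genuine obstacle arises: the argument is essentially the interchange of $\log$ with $\sup$ and $\inf$, and the one point deserving care is checking that the additive constant survives these operations, which is exactly where the uniformity in $B$ of the $A_\infty$ bound is used.
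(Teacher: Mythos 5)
Your proof is correct and follows essentially the same route as the paper's: both hinge on combining Jensen's inequality with the reverse Jensen ($A_\infty$) condition on each ball and then passing to the supremum (resp.\ infimum); the only cosmetic difference is that you take logarithms ball-by-ball before the supremum, while the paper takes the supremum first and then the logarithm, which is equivalent since $\log$ is increasing and continuous.
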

\begin{proof} We show the statement for $M^\natural$. By the fact that $w\in A_\infty$ \eqref{revJensen} and Jensen's inequality,
\begin{equation}\label{minf}
\dashint_B w\dd\mu \leq \normp{w}{\infty}\exp\left(\dashint_B \log w\dd\mu \right) \leq \normp{w}{\infty}\dashint_B w \dd\mu.
\end{equation}
Taking $\sup$, $\log$, and rearranging the terms, 
\begin{gather*}
\log M^\natural w \leq \log\normp{w}{\infty} + M^\natural (\log w) \leq \log\normp{w}{\infty} + \log M^\natural w \\
0 \leq \log\normp{w}{\infty} + \left(M^\natural \log - \log M^\natural \right) w \leq \log\normp{w}{\infty} + \log M^\natural w \\ 
0 \leq \left(\log M^\natural - M^\natural \log \right) w \leq \log\normp{w}{\infty}. 
\end{gather*}
The proof for $m^\natural$ is analogous, taking the infimum of \eqref{minf} instead.
\end{proof}

\begin{lemma}[\cite{MR660603}, Lemma 2]\label{lem:chara} Let $X$ be a metric space with a doubling measure, and $f\in L^1_{\loc}(X)$. Then $f\in BLO(X)$ if and only if $M^\natural f-f\in L^\infty(X)$, and $f\in BUO(X)$ if and only if $f - m^\natural f \in L^\infty(X)$, with 
\begin{equation*}
\norm{f}_{BLO} = \norm{M^\natural f-f}_\infty, \quad 
\norm{f}_{BUO} = \norm{f - m^\natural f}_\infty.
\end{equation*}
\end{lemma}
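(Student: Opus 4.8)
The plan is to establish the $BLO$ characterization in full and then read off the $BUO$ statement for free from the identity $M^\natural f(x) = -m^\natural(-f)(x)$ recorded just before the lemma. Throughout I will use that, since $\mu$ is doubling, the Lebesgue differentiation theorem holds, so $M^\natural f(x) = \sup_{B\ni x}\dashint_B f\dd\mu \ge f(x)$ for a.e.\ $x$; hence $M^\natural f - f$ is nonnegative a.e.\ and $\norm{M^\natural f - f}_\infty$ is simply its essential supremum. The equality of functionals then splits into two inequalities.

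For $\norm{f}_{BLO}\le\norm{M^\natural f-f}_\infty$ I would fix a ball $B$ and observe that, because $B$ is admissible in the supremum defining $M^\natural f(x)$, we have $\dashint_B f\dd\mu\le M^\natural f(x)\le f(x)+\norm{M^\natural f-f}_\infty$ for a.e.\ $x\in B$. Taking the essential infimum over $x\in B$ replaces $f(x)$ by $\essinf_B f$, and taking the supremum over all balls $B$ gives the bound. This direction is routine and requires no special care.

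The reverse inequality $\norm{M^\natural f-f}_\infty\le\norm{f}_{BLO}$ carries the one genuine subtlety. For each fixed ball $B$ the definition of $BLO$ gives $\dashint_B f\dd\mu\le\essinf_B f+\norm{f}_{BLO}$, and I want to replace $\essinf_B f$ by $f(x)$ and then take the supremum over all $B\ni x$ to recover $M^\natural f(x)$. The difficulty is that $\essinf_B f\le f(x)$ holds only for a.e.\ $x\in B$, with an exceptional null set depending on $B$, while the supremum ranges over uncountably many balls, so these null sets could a priori accumulate. I would resolve this by proving that the bad set $\{x : f(x)<\essinf_B f\text{ for some ball }B\ni x\}$ is null. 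It is contained in $\bigcup_{t\in\Q}\big(\{f<t\}\cap V_t\big)$, where $V_t=\bigcup\{B : \essinf_B f>t\}$ is open; invoking the Lindelöf property of the separable space $X$ (separability follows from doubling) yields a countable subcover $\{B_i\}$ of $V_t$, and since $f>t$ a.e.\ on each $B_i$, the set $\{f<t\}\cap V_t$ is null. Off the resulting null set the substitution is legitimate, so $\dashint_B f\dd\mu\le f(x)+\norm{f}_{BLO}$ for every $B\ni x$, and taking the supremum gives $M^\natural f(x)-f(x)\le\norm{f}_{BLO}$ a.e. I expect this measure-theoretic step to be the main obstacle; everything else is bookkeeping.

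Finally I would deduce the $BUO$ statement by applying the $BLO$ result to $g=-f$: since $\norm{-f}_{BLO}=\norm{f}_{BUO}$ and $f-m^\natural f = M^\natural(-f)-(-f)=M^\natural g-g$, the established equality $\norm{g}_{BLO}=\norm{M^\natural g-g}_\infty$ becomes precisely $\norm{f}_{BUO}=\norm{f-m^\natural f}_\infty$, completing the proof.
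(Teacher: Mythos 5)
Your proposal is correct and follows essentially the same route as the paper: both inequalities come from unpacking the definitions, and the $BUO$ statement is obtained from the $BLO$ one by symmetry. The only difference lies in the technical point you rightly flag: where you dispose of the $B$-dependent exceptional null sets via a Lindelöf covering argument over rational thresholds, the paper simply works at a Lebesgue point $x$ of $f$, for which $f(x)\ge\essinf_B f$ holds simultaneously for every ball $B\ni x$, so a single exceptional null set suffices.
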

\begin{proof} We show the statement for $BLO$; the proof for $BUO$ is the same, making the necessary modifications. Assume first that $f\in BLO$, let $x$ be a Lebesgue point of $f$, and $B\subset X$ a ball with $B \ni x$. Then 
$$ \dashint_B f\dd\mu - f(x) \leq \dashint_B f\dd\mu -\essinf_{\substack{B\ni x\\B\subset X}} f(x) \leq \norm{f}_{BLO}.
$$
Taking the supremum over all balls $B\ni x$ we obtain
$$ M^\natural f(x) - f(x) \leq \norm{f}_{BLO}.
$$
Since $x\in X$ is an arbitrary Lebesgue point of $f$, the above implies 
$$ \norm{M^\natural f - f}_\infty \leq \norm{f}_{BLO}. $$

Conversely, let $M^\natural f-f\in L^\infty$. If $B\subset X$ is any ball, $x\in B$, and 
\begin{equation}\label{impo}
f(x) < \dashint_B f \dd\mu - \norm{M^\natural f-f}_\infty, 
\end{equation}
then we have
$$ M^\natural f(x) - f(x) \geq \dashint_B f \dd\mu - f(x) > \norm{M^\natural f-f}_\infty,$$
which means that the set of points $x$ satisfying \eqref{impo} is of measure zero. This implies 
\begin{align*}
\dashint_B f \dd\mu - \norm{M^\natural f-f}_\infty &\leq \essinf_{\substack{B\ni x\\B\subset X}} f(x). 
\end{align*} 
Since the ball $B$ is arbitrary, this is equivalent to $\norm{f}_{BLO} \leq \norm{M^\natural f - f}_\infty$. 
\end{proof}

\section{Boundedness of the natural maximal function}\label{sec:bdd}

We show that the natural maximal function is bounded from $BMO$ to $BLO$ whenever the Hardy--Littlewood maximal function from $A_\infty$ to $A_1$ is. This fact is not directly in our main line of investigation, but it generalizes the Euclidean result of \cite{MR1840094}. Note that we assume an annular decay property to ensure that $w\in A_\infty$ implies a reverse Hölder inequality for $w$.

The following preliminary result can be thought of as a weaker version of the Coifman--Rochberg lemma. The strategy of the proof and treatment of the integral outside $4B$ are in fact similar to the proof of the Coifman--Rochberg lemma; compare \cite[Proposition 2.10]{kurki_mudarra_ext}.
\begin{lemma}\label{lem:mainfty}
Let $X$ be a metric space with a doubling measure, and $w\in L^1_{\loc}(X)$ a nonnegative function. If $w\in RH_s(X)$, then $Mw\in RH_s(X)$. Furthermore, whenever $X$ satisfies an annular decay property, this implies $M(A_\infty(X))\subset A_\infty(X)$.
\end{lemma}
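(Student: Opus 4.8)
The plan is to establish the two assertions in turn: the reverse Hölder statement $w\in RH_s\Rightarrow Mw\in RH_s$ requires no geometric hypothesis, and the inclusion $M(A_\infty)\subset A_\infty$ will follow from it once annular decay is available. To prove the first part, I would fix a ball $B=B(x_0,\rad(B))$ and decompose $w=w\chi_{4B}+w\chi_{(4B)^\comp}$, so that $Mw\leq M(w\chi_{4B})+M(w\chi_{(4B)^\comp})$ pointwise. The two pieces are then estimated separately, the goal in each case being to bound the $L^s$ average over $B$ by $\dashint_B Mw\dd\mu$.

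For the local piece I would invoke the strong $(s,s)$ boundedness of $M$ on the doubling space, which gives $\int_X (M(w\chi_{4B}))^s\dd\mu\leq C\int_{4B}w^s\dd\mu$. Dividing by $\mu(B)$, using the doubling condition to absorb $\mu(4B)$ into $\mu(B)$, and then applying the hypothesis $w\in RH_s$ on the ball $4B$, this reduces to $\big(\dashint_B (M(w\chi_{4B}))^s\dd\mu\big)^{1/s}\leq C\dashint_{4B}w\dd\mu$. Since $4B\ni x$ for every $x\in B$, one has $\dashint_{4B}w\dd\mu\leq Mw(x)$ throughout $B$, and therefore $\dashint_{4B}w\dd\mu\leq\dashint_B Mw\dd\mu$, which controls the local piece as required.

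The tail piece is the main obstacle, and it follows the Coifman--Rochberg scheme. The key geometric observation is that any ball $B'=B(y,\rho)\ni x$ with $x\in B$ that meets $(4B)^\comp$ must satisfy $\rho>\tfrac32\rad(B)$; consequently, for any other point $x'\in B$ one checks directly that $x'\in 3B'$. Averaging and using doubling then yields $\dashint_{B'}w\chi_{(4B)^\comp}\dd\mu\leq C_d^2\,M(w\chi_{(4B)^\comp})(x')$, and taking the supremum over the admissible balls $B'\ni x$ gives the pointwise comparability $M(w\chi_{(4B)^\comp})(x)\leq C_d^2\,M(w\chi_{(4B)^\comp})(x')$ for a.e. $x,x'\in B$. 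Hence $M(w\chi_{(4B)^\comp})$ is comparable to a constant on $B$, so its $L^s$ and $L^1$ averages over $B$ agree up to the doubling constant, and the latter is dominated by $\dashint_B Mw\dd\mu$ since $M(w\chi_{(4B)^\comp})\leq Mw$. Combining the two pieces proves $Mw\in RH_s$.

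For the second assertion, suppose $w\in A_\infty$ and that $X$ satisfies an annular decay property. Then $w$ satisfies a reverse Hölder inequality with some exponent $s>1$, that is $w\in RH_s$, by the equivalence of the $A_\infty$ and reverse Hölder characterizations recorded in Section \ref{preli} (following Kinnunen--Shukla). The first part produces $Mw\in RH_s$ with the \emph{same} exponent; in particular the splitting above already shows $\dashint_B Mw\dd\mu<\infty$ for every ball, so $Mw$ is a genuine weight. Invoking the same equivalence in the reverse direction, namely that a reverse Hölder inequality forces membership in $A_\infty$ under annular decay, we conclude $Mw\in A_\infty$, as claimed.
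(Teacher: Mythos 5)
Your proof is correct, and its overall architecture --- the splitting $w = w\chi_{4B} + w\chi_{(4B)^\comp}$, the strong $(s,s)$ bound of $M$ for the local piece, and the Coifman--Rochberg-type geometric argument showing that $M(w\chi_{(4B)^\comp})$ is essentially constant on $B$ --- is the same as the paper's; the paper dilates to $4B'$ rather than $3B'$ in the tail estimate, but that is cosmetic. There is one genuine and worthwhile difference, namely how you return from $\dashint_{4B} w\dd\mu$ to $\dashint_B Mw\dd\mu$ in the local piece. The paper gets there via $\dashint_{4B} w\dd\mu \le C\dashint_B w\dd\mu$, i.e.\ the doubling property of the weighted measure $w\dd\mu$, which it justifies by invoking the annular decay property; as written, this quietly imports the geometric hypothesis into the first assertion of the lemma, which is stated without it. Your observation that $4B$ is itself a ball containing every $x\in B$, so that $\dashint_{4B} w\dd\mu \le Mw(x)$ pointwise on $B$ and hence $\dashint_{4B} w\dd\mu \le \dashint_B Mw\dd\mu$, is more elementary and needs nothing beyond the doubling of $\mu$. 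This makes your proof of the implication $w\in RH_s \Rightarrow Mw\in RH_s$ genuinely free of annular decay, exactly as the statement promises, with that assumption entering only through the two uses of the equivalence between $A_\infty$ and $\bigcup_{s>1}RH_s$ in the second assertion --- which is also how the paper handles that step.
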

\begin{proof} Assuming that the space $X$ satisfies an annular decay property, $w\in A_\infty(X)$ is equivalent to $w\in RH_s(X)$ for some $s>1$ \cite[Theorem 3.1]{MR3265363}. We are going to show the statement on the reverse Hölder side.

From here on, let $q>1$ denote the reverse Hölder class of $w$. Let $B$ be a ball in $X$, and write $w = w\Chi_{4B} - w(1-\Chi_{4B}) = w_1 + w_2$. Owing to the subadditivity of the maximal function it holds that $Mw \leq Mw_1 + Mw_2$, and
$$
\left(\dashint_B \left(Mw\right)^q\dd\mu\right)^\frac{1}{q} \leq 
2^\frac{q-1}{q}\left( \left(\dashint_B \left(Mw_1\right)^q\dd\mu\right)^\frac{1}{q} + \left(\dashint_B \left(Mw_2\right)^q\dd\mu\right)^\frac{1}{q} \right).
$$
We estimate each term separately. Using respectively the $L^q(X)$ bound for the maximal function, the doubling property of $\mu$, and the reverse Hölder inequality \eqref{RHp} for $w$, we obtain
\begin{align*}
\left(\dashint_B \left(Mw_1\right)^q\dd\mu\right)^\frac{1}{q} & \leq C\left(\frac{1}{\mu(B)}\int_X w_1^q\dd\mu\right)^\frac{1}{q} = C\left(\frac{1}{\mu(B)}\dashint_{4B} w^q\dd\mu\right)^\frac{1}{q}\\
& \leq C(C_d)\left(\dashint_{4B}w^q\dd\mu\right)^\frac{1}{q} \leq C \dashint_{4B}w\dd\mu. 
\end{align*}
Under the annular decay assumption, $w\in A_\infty$ implies that the weighted measure $w\dd\mu$ is comparable to the underlying measure $\mu$, and doubling whenever $\mu$ is \cite{MR3265363}. The doubling property for $w\dd\mu$ lets us conclude that  
$$
\left(\dashint_B \left(Mw_1\right)^q\dd\mu\right)^\frac{1}{q} \leq C\dashint_{4B}w\dd\mu \leq C(C_d) \dashint_B w\dd\mu \leq C \dashint_B Mw\dd\mu.
$$

Next we estimate $Mw_2$. Let $x,y\in B=B(z,r)\subset X$ and let $B'=B(z',r')$ be another ball in $X$ containing $y.$ Assume first that there exists a point $p\in B' \setminus 4B.$ We claim that $r \leq r'.$ Indeed, otherwise we have $\dist(y,p) \leq 2r'\leq 2r$ and 
$$
\dist(y,z) \geq \dist(z,p)-\dist(p,y) \geq 4 r- 2r=2r> r,
$$
implying that $y\notin B,$ a contradiction. Using the fact that $r \leq r'$ we have for any $q \in B$
$$
\dist(q,z') \leq \dist(q,y) + \dist(y,z') \leq 2r'+r'=3r',
$$
which shows that $B \subset 4 B'.$ In particular $x\in 4B'$ and we can write
\begin{equation*}
\dashint_{B'}w_2\dd\mu \leq C(C_d)\dashint_{4B'}w_2\dd\mu \leq C\sup_{\substack{B\ni x\\B\subset X}}  \dashint_{B}w\dd\mu = C (Mw)(x). 
\end{equation*}
If $B' \subset 4B$ then $\int_{B'}w_2=0,$ and the preceding estimate trivially holds. Either way the right-hand side is independent of $y$, and thus
\begin{equation*}
Mw_2(y) = \sup_{\substack{B\ni y\\B\subset X}}\dashint_{B}w\dd\mu \leq C (Mw)(x)
\end{equation*}
for any and all points $x,y\in B$. Raising to power $q$, and integrating first with respect to $y$ and then to $x$, we find
$$
\left(\dashint_B \left(Mw_2\right)^q\dd\mu\right)^\frac{1}{q} \leq C \dashint_B Mw\dd\mu.
$$ 
\end{proof} 

\begin{lemma}\label{lem:mmapsto} Let $X$ be a metric space with a doubling measure and satisfying an annular decay property, and $w$ a weight. If $w\in A_\infty(X)$, then $Mw\in A_1(X)$, with $\normp{Mw}{1}$ only depending on the doubling constant $C_d$. 
\end{lemma}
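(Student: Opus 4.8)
The plan is to verify the maximal-function form of the $A_1$ condition \eqref{dfn:A1max}, that is, to show $\dashint_B Mw\dd\mu \leq C\essinf_{x\in B} Mw(x)$ for every ball $B\subset X$ with $C$ independent of $B$. I would reuse the decomposition and the two estimates already carried out in Lemma \ref{lem:mainfty}. Fix a ball $B=B(z,r)$ and split $w=w_1+w_2$ with $w_1=w\Chi_{4B}$ and $w_2=w(1-\Chi_{4B})$, so that subadditivity of $M$ gives $\dashint_B Mw\dd\mu \leq \dashint_B Mw_1\dd\mu + \dashint_B Mw_2\dd\mu$. It then suffices to bound each of the two averages by $\essinf_B Mw$.

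For the far piece I would invoke the pointwise estimate obtained inside the proof of Lemma \ref{lem:mainfty}, namely that $Mw_2(y)\leq C(C_d)\,(Mw)(x)$ holds simultaneously for all $x,y\in B$, with $C(C_d)$ depending only on the doubling constant. Integrating in $y$ over $B$ and then taking the infimum over $x\in B$ gives $\dashint_B Mw_2\dd\mu \leq C(C_d)\essinf_B Mw$, contributing only a $C_d$-dependence. For the near piece I would use the hypothesis $w\in A_\infty$, which under the annular decay property is equivalent to $w\in RH_q$ for some $q>1$ by \cite[Theorem 3.1]{MR3265363}. Then Hölder's inequality, the $L^q$ boundedness of $M$, the doubling property, and the reverse Hölder inequality \eqref{RHp}, precisely as in Lemma \ref{lem:mainfty}, yield $\dashint_B Mw_1\dd\mu \leq \left(\dashint_B (Mw_1)^q\dd\mu\right)^{1/q} \leq C\dashint_{4B} w\dd\mu$. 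The decisive observation is that $4B$ is itself a ball containing every $x\in B$, so $\dashint_{4B} w\dd\mu \leq Mw(x)$ for each such $x$; taking the infimum over $x\in B$ gives $\dashint_B Mw_1\dd\mu \leq C\essinf_B Mw$. Combining the two pieces proves $Mw\in A_1$.

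I expect the near piece to be the main obstacle. The naive route through the weak-type $(1,1)$ inequality for $M$ bounds $\mu(\{Mw_1>\lambda\}\cap B)$ by $C_d\lambda^{-1}w(4B)$, and integrating in $\lambda$ then produces a logarithmically divergent expression rather than the clean bound by $\dashint_{4B}w\dd\mu$ that the $A_1$ estimate requires. The self-improving higher integrability furnished by the reverse Hölder inequality is exactly what removes this endpoint divergence, and this is the precise point at which the annular decay assumption is genuinely needed. I would also track the constants carefully: the far piece is controlled by $C_d$ alone, while the near piece is governed by the $L^q$ bound for $M$ together with the reverse Hölder constant of $w$, so some attention is required to confirm the stated dependence of $\normp{Mw}{1}$ on the doubling data.
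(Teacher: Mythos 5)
Your proof is correct, but it takes a genuinely different route from the paper's. The paper verifies the pointwise form \eqref{dfn:A1max} of the $A_1$ condition: it uses the reverse H\"older inequality for $w$ to sandwich $Mw$ between $C\left(Mw^s\right)^{1/s}$ and $\left(Mw^s\right)^{1/s}$, invokes Lemma \ref{lem:mainfty} to place $Mw$ in $RH_s$, and then applies the Coifman--Rochberg lemma (cited from \cite[Proposition 2.10]{kurki_mudarra_ext}) to conclude that $\left(Mw^s\right)^{1/s}\in A_1$, whence $M(Mw)\leq M\bigl(\left(Mw^s\right)^{1/s}\bigr)\leq C\left(Mw^s\right)^{1/s}\leq CMw$. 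You instead verify the average form \eqref{dfn:A1} directly, reusing the decomposition $w=w\Chi_{4B}+w(1-\Chi_{4B})$ and both internal estimates from the proof of Lemma \ref{lem:mainfty}; your one new ingredient is the observation that $4B$ is itself a competitor ball for every $x\in B$, so that $\dashint_{4B}w\dd\mu\leq\essinf_{x\in B}Mw(x)$. This makes the argument self-contained: in effect you inline the Coifman--Rochberg proof, with the Kolmogorov weak-type step (which, as you correctly note, diverges logarithmically at exponent $1$) replaced by the genuine $L^q$ gain supplied by $w\in RH_q$, which is exactly where annular decay enters in both proofs. Your version even streamlines the near-piece estimate: the paper's Lemma \ref{lem:mainfty} needs the comparability and doubling of the weighted measure $w\dd\mu$ to return from $\dashint_{4B}w\dd\mu$ to $\dashint_B Mw\dd\mu$, whereas your endpoint $\essinf_B Mw$ absorbs $\dashint_{4B}w\dd\mu$ with no further input. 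What the paper's route buys is brevity (given Coifman--Rochberg as a black box) and the pointwise inequality $M(Mw)\leq CMw$; what yours buys is independence from that external lemma. On constants the two proofs are in the same position: both depend on $C_d$, on the reverse H\"older exponent, and on the reverse H\"older constant of $w$, and the reduction of the latter two to a dependence on $C_d$ alone rests on the same appeal to \cite{shukla_thesis} in both cases --- precisely the point you flag at the end.
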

\begin{proof}
Assuming that $X$ satisfies an annular decay property, for a weight $w\in A_\infty$ there exists $s>1$ such that $w$ satisfies the reverse Hölder inequality \eqref{RHp} \cite[Theorem 3.1]{MR3265363}, implying
\begin{equation*}
C\left(\dashint_B w^s \dd\mu \right)^\frac{1}{s} \leq \dashint_B w \dd\mu \leq \left(\dashint_B w^s \dd\mu \right)^\frac{1}{s}
\end{equation*}
or, in terms of the maximal function, 
\begin{equation}\label{xximals}
C\left(Mw^s\right)^\frac{1}{s} \leq Mw \leq \left(Mw^s\right)^\frac{1}{s}.
\end{equation}
By Lemma \ref{lem:mainfty} $Mw$ belongs to $RH_s$. Applying the reverse Hölder inequality for $Mw$, we find that $\left(Mw\right)^s\in A_\infty$ with $ \normp{\left(Mw\right)^s}{\infty} = C^s\normp{Mw}{\infty}^s. $
Now by \eqref{xximals} the function $Mw^s$ is comparable to $ \left(Mw\right)^s$, so it also lies in $A_\infty$ with 
 $$\normp{Mw^s}{\infty} = C\normp{Mw}{\infty}.$$
Here the constant $C$ only depends on the doubling constant $C_d$ and parameters $C, s$ of the reverse Hölder inequality for $w$. The latter, in turn, are again controlled by $C_d(\mu)$; see Theorem 2.15 and the preceding discussion in \cite{shukla_thesis}. The Coifman--Rochberg lemma, whose metric-space proof can be found in \cite[Proposition 2.10]{kurki_mudarra_ext}, implies that $\left(Mw^s\right)^{1/s}$, $s>1$, belongs to $A_1$. Precisely, 
\begin{equation}\label{A1mws}
M\left(\left(Mw^s\right)^\frac{1}{s}\right) \leq C(C_d,s)\left(Mw^s\right)^\frac{1}{s}.
\end{equation}
Applying \eqref{xximals} and the $A_1$ condition \eqref{A1mws} for $\left(Mw^s\right)^{1/s}$, we obtain
$$
M\left(Mw\right) \leq M\left(\left(Mw^s\right)^\frac{1}{s}\right) \leq C(C_d, s)\left(Mw^s\right)^\frac{1}{s} \leq CMw,
$$
whereby the statement is proven.
\end{proof}

\begin{lemma}\label{lem:logmaps} Let $X$ be a metric space with a doubling measure, and $w$ a weight. If $w\in A_1(X)$, then $\log w\in BLO(X)$.
\end{lemma}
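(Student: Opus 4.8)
The plan is to set $f=\log w$ and read off membership in $BLO$ directly from Lemma \ref{lem:chara}: we have $\log w\in BLO(X)$ precisely when $M^\natural(\log w)-\log w\in L^\infty(X)$. Since $M^\natural(\log w)\geq\log w$ holds a.e. anyway — shrinking balls around a Lebesgue point recover the pointwise value — the entire task collapses to producing an a.e. \emph{upper} bound for $M^\natural(\log w)-\log w$.

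The mechanism I would use is to chain the commutation Lemma \ref{lem:commu} with the $A_1$ condition. First I would record that $A_1(X)\subset A_\infty(X)$, with $\normp{w}{\infty}\leq\normp{w}{1}$: on any ball $B$ one has $w\geq\essinf_B w$ a.e., so Jensen gives $\exp\!\left(\dashint_B\log w\dd\mu\right)\geq\essinf_B w$, and then \eqref{dfn:A1} yields $\dashint_B w\dd\mu\leq\normp{w}{1}\essinf_B w\leq\normp{w}{1}\exp\!\left(\dashint_B\log w\dd\mu\right)$, which is exactly \eqref{revJensen}. This inclusion is what licenses applying Lemma \ref{lem:commu}, whose left inequality $0\leq\log\!\left(M^\natural w\right)-M^\natural(\log w)$ rearranges to $M^\natural(\log w)\leq\log\!\left(M^\natural w\right)$. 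Next, because $w$ is nonnegative, its natural maximal function agrees with the Hardy--Littlewood maximal function, $M^\natural w=Mw$, so the $A_1$ condition in the form \eqref{dfn:A1max} gives $M^\natural w=Mw\leq\normp{w}{1}w$ a.e.; taking logarithms, $\log\!\left(M^\natural w\right)\leq\log\normp{w}{1}+\log w$. Combining the two displays,
\[
M^\natural(\log w)-\log w\leq\log\normp{w}{1}\quad\text{a.e.}
\]
Together with the automatic lower bound this gives $\norm{M^\natural(\log w)-\log w}_\infty\leq\log\normp{w}{1}$, and Lemma \ref{lem:chara} then yields $\log w\in BLO(X)$ with $\norm{\log w}_{BLO}\leq\log\normp{w}{1}$.

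I do not expect a genuine obstacle: once the two lemmas are aligned, the argument is a single composition, and the logarithmic formalism advertised in the introduction does all the work. The only points I would take care to state explicitly are the identity $M^\natural w=Mw$ for nonnegative $w$ (so that the $A_1$ hypothesis feeds directly into the commutation estimate) and the preliminary inclusion $A_1\subset A_\infty$, which is what permits invoking Lemma \ref{lem:commu} in the first place; everything else is monotonicity of the logarithm.
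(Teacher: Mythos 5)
Your proof is correct, but it takes a different route from the paper's. The paper verifies the $BLO$ condition directly on a fixed ball $B$: Jensen's inequality gives $\dashint_B\log w\dd\mu\leq\log\dashint_B w\dd\mu$, the $A_1$ condition \eqref{dfn:A1} bounds the right-hand side by $\log\normp{w}{1}+\essinf_{x\in B}\log w(x)$, and rearranging yields $\dashint_B\log w\dd\mu-\essinf_{x\in B}\log w(x)\leq\log\normp{w}{1}$ — two lines, no auxiliary machinery. You instead pass through the pointwise maximal-function formalism: the inclusion $A_1\subset A_\infty$ licenses Lemma \ref{lem:commu}, whose left inequality gives $M^\natural(\log w)\leq\log Mw$, the $A_1$ condition in the form \eqref{dfn:A1max} gives $\log Mw\leq\log\normp{w}{1}+\log w$, and Lemma \ref{lem:chara} converts the resulting $L^\infty$ bound back into the $BLO$ condition. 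The two arguments are mathematically the same inequalities in different packaging (Jensen is hidden in the left half of Lemma \ref{lem:commu}, and Lemma \ref{lem:chara} is exactly the dictionary between the ball-average and pointwise formulations of $BLO$), and both yield the same constant $\norm{\log w}_{BLO}\leq\log\normp{w}{1}$; the paper's version is more elementary and self-contained, while yours is closer in spirit to the converse direction of Theorem \ref{thm:Aside}, which is proved exactly by your chain of lemmas. Two minor remarks: the inequality $\exp\left(\dashint_B\log w\dd\mu\right)\geq\essinf_{x\in B}w(x)$ in your verification of $A_1\subset A_\infty$ follows from monotonicity of the integral applied to $\log w\geq\log\essinf_B w$, not from Jensen (Jensen points the other way); and since you only use the \emph{left} inequality of Lemma \ref{lem:commu}, which is pure Jensen and needs no $A_\infty$ hypothesis, the preliminary inclusion $A_1\subset A_\infty$ is not actually load-bearing in your argument, though it is harmless.
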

\begin{proof}
Let $B\subset X$ be a ball, apply Jensen's inequality and the $A_1$ condition \eqref{dfn:A1} for $w$:
\begin{align*}
\dashint_B\log w\dd\mu \leq \log \dashint_B w\dd\mu \leq \log\left(\normp{w}{1}\essinf_{x\in B} w(x)\right) = \log \normp{w}{1} + \essinf_{x\in B} \log w(x).
\end{align*}
Rearranging, we obtain 
$$ 
\dashint_B\log w\dd\mu - \essinf_{x\in B}\log w(x) \leq \log \normp{w}{1}
$$
for every $B\subset X$, which is the $BLO$ condition for $\log w$. 
\end{proof}

\begin{propo}\label{propo:bounded} Let $X$ be a metric space with a doubling measure and satisfying an annular decay property. If $f\in BMO(X)$, then $M^\natural f\in BLO(X)$ with 
$$\lVert M^\natural f\rVert_{BLO}\leq C(C_d)\norm{f}_{BMO}.$$
\end{propo}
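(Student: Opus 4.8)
The plan is to run the logarithmic formalism in reverse: realize $M^\natural f$, up to a bounded additive error, as the logarithm of an $A_1$ weight, and then read off the $BLO$ bound from Lemmas \ref{lem:commu}, \ref{lem:mmapsto}, and \ref{lem:logmaps}. First I would dispose of the trivial case $\norm{f}_{BMO}=0$, in which $f$ agrees almost everywhere with a constant, so that $M^\natural f = f$ a.e. and the claim is immediate. Assuming $\norm{f}_{BMO}>0$, the key preliminary step is to exponentiate a suitably rescaled copy of $f$. By the John--Nirenberg inequality on spaces of homogeneous type there exist $c_0 = c_0(C_d)>0$ and $C_1 = C_1(C_d)\geq 1$ such that every $g\in BMO(X)$ with $\norm{g}_{BMO}\leq c_0$ satisfies $e^{g}\in A_\infty(X)$ with $\normp{e^{g}}{\infty}\leq C_1$. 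Setting $\lambda = c_0/\norm{f}_{BMO}$ and $w = e^{\lambda f}$, we then have $\norm{\lambda f}_{BMO}=c_0$, hence $w\in A_\infty(X)$ with $\normp{w}{\infty}\leq C_1$.

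Next I would apply Lemma \ref{lem:commu} to $w$. Since $w$ is nonnegative we have $M^\natural w = Mw$, and since $\log w = \lambda f$ the positive homogeneity of $M^\natural$ gives $M^\natural\log w = \lambda M^\natural f$, so the commutation inequality reads
\[
\log Mw - \log\normp{w}{\infty} \;\leq\; \lambda M^\natural f \;\leq\; \log Mw \qquad\text{a.e.}
\]
By Lemma \ref{lem:mmapsto}, $Mw\in A_1(X)$ with $\normp{Mw}{1}\leq C(C_d)$, and then Lemma \ref{lem:logmaps} yields $\log Mw\in BLO(X)\subset L^1_{\loc}(X)$ with $\norm{\log Mw}_{BLO}\leq \log\normp{Mw}{1}\leq C(C_d)$. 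In particular the displayed sandwich squeezes $M^\natural f$ between two locally integrable functions, so $M^\natural f\in L^1_{\loc}(X)$ and its $BLO$ seminorm is meaningful.

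Finally I would transfer the oscillation bound across the sandwich. Writing $g = \log Mw - \lambda M^\natural f$, so that $0\leq g\leq \log\normp{w}{\infty}\leq \log C_1$, for every ball $B$ we get $\dashint_B \lambda M^\natural f\dd\mu \leq \dashint_B \log Mw\dd\mu$ and $\essinf_B \lambda M^\natural f \geq \essinf_B \log Mw - \log C_1$, whence
\[
\dashint_B \lambda M^\natural f\dd\mu - \essinf_B \lambda M^\natural f \;\leq\; \norm{\log Mw}_{BLO} + \log C_1 \;\leq\; C(C_d).
\]
Taking the supremum over all balls $B$ and using $\norm{\lambda M^\natural f}_{BLO} = \lambda\norm{M^\natural f}_{BLO}$ gives $\norm{M^\natural f}_{BLO}\leq C(C_d)/\lambda = (C(C_d)/c_0)\norm{f}_{BMO}$, which is the asserted estimate with a constant depending only on $C_d$.

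The main obstacle is the preliminary step: one needs the John--Nirenberg inequality in the sharp quantitative form ``small $BMO$ norm forces $A_\infty$ membership with constant depending only on $C_d$'', since it is precisely the rescaling $\lambda = c_0/\norm{f}_{BMO}$ that converts the multiplicative bound $\normp{w}{\infty}\leq C_1$ into a bound \emph{linear} in $\norm{f}_{BMO}$, rather than merely finite. This is also where the hypotheses enter: the doubling condition supplies John--Nirenberg, while the annular decay property is used through Lemma \ref{lem:mmapsto} (via the reverse-Hölder self-improvement behind $Mw\in A_1$). Everything else is bookkeeping with the three cited lemmas and the positive homogeneity of $M^\natural$ and of the $BLO$ seminorm.
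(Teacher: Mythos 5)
Your proof is correct and follows essentially the same route as the paper's: exponentiate a John--Nirenberg-rescaled copy of $f$ to obtain a weight $w$ in $A_\infty$ (the paper uses $A_2$) with constant controlled by $C_d$, pass $M^\natural$ through the logarithm via Lemma \ref{lem:commu}, and conclude from Lemmas \ref{lem:mmapsto} and \ref{lem:logmaps} that $\log Mw\in BLO(X)$ with norm depending only on $C_d$. The only difference is one of explicitness: you spell out the normalization $\lambda = c_0/\norm{f}_{BMO}$ and the bookkeeping that transfers the $BLO$ bound across the bounded additive error, which the paper compresses into the statement $M^\natural f = a\left(\log Mw + b\right)$ with $a=a(C_d)$ and $b\in L^\infty$.
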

\begin{proof}
Let $f\in BMO$. It follows from the John--Nirenberg lemma that $f = a \log w$ for some $a = a(C_d) > 0$ and $w\in A_2$. 
For this lemma and corollary in metric spaces see \cite[Sections 3.3 and 3.4]{MR2867756} and \cite[Appendix]{MR1756109}, as well as \cite[Theorem 3.11]{shukla_thesis}. By the commutation Lemma \ref{lem:commu} we have
$$
M^\natural f(x) = a\left(\log Mw(x)+b(x)\right),
$$
where $b\in L^\infty$. As for the $\log Mw$ term, by Lemma \ref{lem:mmapsto} $M$ maps $A_\infty$ into $A_1$, and by Lemma \ref{lem:logmaps} $\log$ maps $A_1$ into $BLO$, with norms and characteristic constants depending only on the underlying measure $\mu$. 
\end{proof}
\begin{coro}\label{coro:bounded}
Let $X$ be a metric space with a doubling measure and satisfying an annular decay property. If $f\in BMO(X)$, then $M f\in BLO(X)$ with $$\norm{M f}_{BLO}\leq C(C_d)\norm{f}_{BMO}.$$
\end{coro}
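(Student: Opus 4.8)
The plan is to deduce the corollary directly from Proposition \ref{propo:bounded} by observing that the Hardy--Littlewood maximal function is nothing but the natural maximal function applied to $\abs{f}$. Indeed, since $Mf(x) = \sup_{B\ni x}\dashint_B \abs{f}\dd\mu = M^\natural(\abs{f})(x)$ for every $x\in X$, it suffices to show that $\abs{f}\in BMO(X)$ with $\norm{\abs{f}}_{BMO}\leq C\norm{f}_{BMO}$ and then apply the proposition to the function $\abs{f}$ in place of $f$.

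The one substantive step is therefore the stability of $BMO$ under taking absolute values, which I would establish by the standard two-constant argument. Fix a ball $B\subset X$. Combining the reverse triangle inequality $\bigl|\abs{f(x)}-\abs{f_B}\bigr|\leq\abs{f(x)-f_B}$ with the elementary bound $\dashint_B\abs{g-g_B}\dd\mu\leq 2\dashint_B\abs{g-c}\dd\mu$, valid for any constant $c$ (which follows from the triangle inequality and $\abs{c-g_B}\leq\dashint_B\abs{c-g}\dd\mu$), applied to $g=\abs{f}$ and $c=\abs{f_B}$, one obtains
$$\dashint_B\bigl|\abs{f}-(\abs{f})_B\bigr|\dd\mu \leq 2\dashint_B\bigl|\abs{f}-\abs{f_B}\bigr|\dd\mu \leq 2\dashint_B\abs{f-f_B}\dd\mu \leq 2\norm{f}_{BMO}.$$
Taking the supremum over all balls $B\subset X$ yields $\norm{\abs{f}}_{BMO}\leq 2\norm{f}_{BMO}$, so that $\abs{f}$ is admissible for Proposition \ref{propo:bounded}.

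With this in hand the conclusion is immediate: applying Proposition \ref{propo:bounded} to $\abs{f}$ and using the identity $Mf = M^\natural(\abs{f})$ gives
$$\norm{Mf}_{BLO} = \norm{M^\natural(\abs{f})}_{BLO} \leq C(C_d)\norm{\abs{f}}_{BMO} \leq 2\,C(C_d)\norm{f}_{BMO},$$
which is the claimed estimate after absorbing the factor $2$ into the constant. Since the argument is essentially a reduction to the already-established natural case, I do not expect any serious obstacle beyond the $BMO$ stability estimate above; the annular decay hypothesis enters only indirectly, through its use in Proposition \ref{propo:bounded}, and no further geometric input is required here.
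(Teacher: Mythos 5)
Your proof is correct and takes exactly the same route as the paper: the identity $Mf = M^\natural(\abs{f})$ together with an application of Proposition \ref{propo:bounded} to $\abs{f}$. The only difference is that you spell out the stability estimate $\norm{\abs{f}}_{BMO}\leq 2\norm{f}_{BMO}$, which the paper leaves implicit in its constant $2C(C_d)$.
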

\begin{proof}
Let $f\in BMO$. By Proposition \ref{propo:bounded} $Mf = M^\natural(\abs{f})\in BLO$, and 
$$ \norm{Mf}_{BLO} = \lVert M^\natural(\abs{f})\rVert_{BLO} \leq 2C(C_d)\norm{f}_{BMO}.
$$
\end{proof}
We note that Proposition \ref{propo:bounded} and Corollary \ref{coro:bounded} can be adapted for the natural minimal function by the algebraic fact that $M^\natural f = -m^\natural\left(-f\right)$. 
\begin{coro}\label{coro:minimal}
Let $X$ be a metric space with a doubling measure and satisfying an annular decay property. If $f\in BMO(X)$, then  
\begin{gather*}
m^\natural f\in BUO(X) \quad \text{with}\quad \lVert m^\natural f\rVert_{BUO} \leq C(C_d)\norm{f}_{BMO},\\
m f\in BUO(X) \quad \text{with}\quad \norm{mf}_{BUO}\leq C(C_d)\norm{f}_{BMO}.
\end{gather*}
\end{coro}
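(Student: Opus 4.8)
The plan is to reduce both statements to Proposition \ref{propo:bounded} by means of the algebraic identity $M^\natural f = -m^\natural(-f)$ recorded just before the corollary, which rearranges to $m^\natural f = -M^\natural(-f)$. The two elementary facts that make this reduction clean are the symmetry of $BMO$ under negation, $\norm{-f}_{BMO} = \norm{f}_{BMO}$ (immediate from the definition, since $(-f)_B = -f_B$), and the duality between the oscillation spaces recorded in Section \ref{preli}, namely $\norm{-g}_{BUO} = \norm{g}_{BLO}$.

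First I would treat $m^\natural$. Given $f\in BMO$, set $g = -f$, so that $g\in BMO$ with $\norm{g}_{BMO} = \norm{f}_{BMO}$. By Proposition \ref{propo:bounded}, $M^\natural g\in BLO$ with $\norm{M^\natural g}_{BLO}\leq C(C_d)\norm{g}_{BMO}$. Since $m^\natural f = -M^\natural g$, applying the identity $\norm{-(\cdot)}_{BUO} = \norm{\cdot}_{BLO}$ gives
\[ \norm{m^\natural f}_{BUO} = \norm{M^\natural g}_{BLO} \leq C(C_d)\norm{g}_{BMO} = C(C_d)\norm{f}_{BMO}, \]
which is the first assertion.

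For $mf$ I would mimic the passage from Proposition \ref{propo:bounded} to Corollary \ref{coro:bounded}. Because $\abs{f}\geq 0$, the definitions of $m$ and $m^\natural$ give $mf = m^\natural(\abs{f})$ directly. The standard fact that $\abs{f}\in BMO$ with $\norm{\abs{f}}_{BMO}\leq 2\norm{f}_{BMO}$, which follows from $\bigl|\abs{f(x)}-\abs{f(y)}\bigr|\leq \abs{f(x)-f(y)}$, then lets me apply the first part of the corollary to $\abs{f}$ in place of $f$, yielding
\[ \norm{mf}_{BUO} = \norm{m^\natural(\abs{f})}_{BUO} \leq C(C_d)\norm{\abs{f}}_{BMO} \leq 2C(C_d)\norm{f}_{BMO}. \]

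I do not expect a genuine obstacle here: all the analytic content is already contained in Proposition \ref{propo:bounded}, and the corollary is a matter of correctly tracking the sign flips. The only points demanding a moment's care are verifying the elementary relations $m^\natural f = -M^\natural(-f)$ and $\norm{-g}_{BUO}=\norm{g}_{BLO}$, so that the negation is absorbed without altering the constant $C(C_d)$, together with the factor $2$ incurred in the $mf$ case when replacing $f$ by $\abs{f}$.
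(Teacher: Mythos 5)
Your proposal is correct and follows essentially the same route as the paper: both reduce the $m^\natural$ statement to Proposition \ref{propo:bounded} via $m^\natural f = -M^\natural(-f)$ together with $\norm{-g}_{BUO}=\norm{g}_{BLO}$, and both handle $mf$ through the identity $mf = m^\natural(\abs{f})$ with the factor $2$ coming from $\norm{\abs{f}}_{BMO}\leq 2\norm{f}_{BMO}$. Your write-up even makes explicit the bound on $\norm{\abs{f}}_{BMO}$ that the paper leaves implicit.
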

\begin{proof}
Let $f\in BMO$ and denote $y = -f$. Now $y\in BMO$ as well, and 
$$m^\natural f = m^\natural(-y) = - M^\natural y = -M^\natural(-f). 
$$
By Proposition \ref{propo:bounded} $M^\natural(-f)$ is in $BLO$, so $-M^\natural(-f) = m^\natural f\in BUO$ with 
\begin{align*}
\lVert m^\natural f\rVert_{BUO} & = \lVert -M^\natural (-f)\rVert _{BUO} = \lVert M^\natural (-f)\rVert_{BLO}\\
& \leq C(C_d)\norm{-f}_{BMO} = C\norm{f}_{BMO}.
\end{align*}
The second statement follows immediately, since now $mf = m^\natural(\abs{f})\in BUO$ and
$$ \norm{mf}_{BUO} = \lVert m^\natural(\abs{f})\rVert_{BUO} \leq 2C(C_d)\norm{f}_{BMO}. $$
\end{proof}

The following Proposition reproduces the final theorem of \cite{MR1840094}. This can be read as the converse of Proposition \ref{propo:bounded}, implying that in spaces satisfying an annular decay property, the boundedness of $M^\natural\mathbin{:}BMO \rightarrow BLO$ is equivalent to $M(A_\infty)\subset A_1$. In either case the John--Nirenberg lemma is instrumental in making the connection with $A_p$ and $BMO$.

\begin{propo} Let $X$ be a metric space with a doubling measure and satisfying an annular decay property. Assume that $M^\natural\mathbin{:}BMO(X) \rightarrow BLO(X)$ with $\norm{M^\natural f}_{BLO}\leq C(C_d)\norm{f}_{BMO}$. Then, if $w\in A_\infty(X)$, we have $Mw\in A_1(X)$ with $\normp{Mw}{1}$ only depending on the doubling constant $C_d$. 
\end{propo}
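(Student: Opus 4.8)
The plan is to run the logarithmic dictionary of Section \ref{sec:x3m} in reverse, mirroring the proof of Proposition \ref{propo:bounded}. First I would record that $w\in A_\infty(X)$ forces $\log w\in BMO(X)$: combining the reverse Jensen inequality \eqref{revJensen} with Jensen's inequality gives $0\le \log\bigl(\dashint_B w\dd\mu\bigr)-\dashint_B\log w\dd\mu\le \log\normp{w}{\infty}$ on every ball, from which $\norm{\log w}_{BMO}\le C\log\normp{w}{\infty}$ follows in the standard way. Thus $f:=\log w$ is a legitimate input for the assumed boundedness.

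Next I would apply the hypothesis to $f=\log w$ to obtain $M^\natural(\log w)\in BLO(X)$ with $\norm{M^\natural(\log w)}_{BLO}\le C(C_d)\norm{\log w}_{BMO}$. The commutation Lemma \ref{lem:commu}, applied to $w\in A_\infty$, then yields $\log Mw=\log M^\natural w=M^\natural(\log w)+b$, where $b\in L^\infty(X)$ with $0\le b\le \log\normp{w}{\infty}$ (here I use $M^\natural w=Mw$ because $w\ge 0$). Since adding an $L^\infty$ function changes the $BLO$ functional by at most $2\norm{b}_\infty$, this already gives $\log Mw\in BLO(X)$.

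To convert $\log Mw\in BLO$ back into the $A_1$ inequality I would invoke Lemma \ref{lem:commu} a second time, now for the weight $Mw$, which lies in $A_\infty(X)$ by Lemma \ref{lem:mainfty}. This produces $\log M(Mw)=M^\natural(\log Mw)+b_2$ with $0\le b_2\le\log\normp{Mw}{\infty}$, and combining it with the $BLO$ bound for $\log Mw$ through Lemma \ref{lem:chara} gives
\[
\log\frac{M(Mw)(x)}{Mw(x)}=\bigl(M^\natural(\log Mw)-\log Mw\bigr)(x)+b_2(x)\le \norm{\log Mw}_{BLO}+\log\normp{Mw}{\infty}
\]
for almost every $x$, which is exactly the $A_1$ condition \eqref{dfn:A1max} for $Mw$. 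Up to this point the argument delivers the qualitative inclusion $M(A_\infty(X))\subset A_1(X)$ cleanly.

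The hard part will be the assertion that $\normp{Mw}{1}$ depends only on $C_d$: the estimates above a priori involve $\norm{\log w}_{BMO}$ and $\normp{w}{\infty}$, neither of which is controlled by $C_d$ alone. I would remove this dependence by the same John--Nirenberg normalization used in Proposition \ref{propo:bounded}, replacing $w$ by a representative whose $A_\infty$ constant and $BMO$ data are bounded in terms of $C_d$, so that $\norm{\log Mw}_{BLO}$ and $\normp{Mw}{\infty}$ collapse to $C_d$-dependence. I expect the genuine obstacle to be the uniform control of $\normp{Mw}{\infty}$ after normalization; this is precisely where the reverse Hölder self-improvement available under the annular decay assumption (cf. Lemma \ref{lem:mainfty} and Lemma \ref{lem:mmapsto}) must be brought to bear.
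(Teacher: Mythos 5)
Your main argument is correct and is essentially the paper's proof: both apply the hypothesis to $\log w$ (legitimate input since $w\in A_\infty$ gives $\log w\in BMO$), use the commutation Lemma \ref{lem:commu} once for $w$ and once for $Mw$ (with $Mw\in A_\infty$ supplied by Lemma \ref{lem:mainfty}), and close with the $BLO$ characterization of Lemma \ref{lem:chara}. The only cosmetic difference is that you apply Lemma \ref{lem:chara} to $\log Mw$, viewed as a bounded perturbation of $M^\natural(\log w)$, whereas the paper applies it directly to $M^\natural(\log w)\in BLO$; this costs you one extra factor of $\normp{w}{\infty}$ in the final constant and nothing else. Concerning your last paragraph: the ``hard part'' you identify is not actually resolved in the paper either. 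The paper's own proof ends with
\[
\normp{Mw}{1} \leq \normp{Mw}{\infty}\normp{w}{\infty}C(C_d)e^{\norm{\log w}_{BMO}},
\]
a constant that manifestly depends on the weight through $\normp{w}{\infty}$, $\normp{Mw}{\infty}$, and $\norm{\log w}_{BMO}$, exactly as in your estimate; the ``only depending on $C_d$'' phrasing of the statement rests on the paper's standing convention (see the end of the proof of Lemma \ref{lem:mmapsto}, citing Shukla's thesis) that the relevant reverse H\"older and $A_\infty$ parameters are controlled by the doubling constant. So no John--Nirenberg renormalization of $w$ is needed or attempted there --- and your proposed fix would be delicate anyway, since neither rescaling $w\mapsto\lambda w$ (which leaves $\normp{w}{\infty}$ unchanged) nor powering $w\mapsto w^{\varepsilon}$ (which does not interact simply with $Mw$) achieves such a normalization; the expected resolution is simply to record the constant with its stated dependence, as the paper does.
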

\begin{proof}
Let $w\in A_\infty$. By the John--Nirenberg lemma and the assumption we have $\log w\in BMO$ and $M^\natural\log w \in BLO$. The commutation Lemma \ref{lem:commu} gives 
\begin{equation}\label{firstCommu}
M^\natural(\log w) \leq \log Mw \leq \log \normp{w}{\infty} + M^\natural(\log w). 
\end{equation}
By Lemma \ref{lem:mainfty} $Mw$ also lies in $A_\infty$, and we can apply the commutation lemma to it: 
\begin{gather}
\nonumber M^\natural(\log Mw) \leq \log M(Mw) \leq \log\normp{Mw}{\infty} + M^\natural(\log Mw)\\
\label{fcondCommu} e^{M^\natural(\log Mw)} \leq M(Mw) \leq \normp{Mw}{\infty}e^{M^\natural(\log Mw)}.
\end{gather}
Combining \eqref{firstCommu} and \eqref{fcondCommu} leads to 
\begin{equation}
\label{frd} e^{M^\natural(M^\natural\log w)} \leq M(Mw) \leq \normp{Mw}{\infty}\normp{w}{\infty}e^{M^\natural(M^\natural\log w)}.
\end{equation}
Recall that by Lemma \ref{lem:chara}
\begin{equation}\label{froth}
M^\natural f - f \leq \norm{f}_{BLO} <\infty.
\end{equation}
Applying \eqref{frd}, \eqref{froth}, and \eqref{firstCommu} in this order, we obtain
\begin{align*}
M(Mw) & \leq \normp{Mw}{\infty}\normp{w}{\infty}e^{M^\natural(M^\natural\log w)}\\
& \leq \normp{Mw}{\infty}\normp{w}{\infty}e^{M^\natural(\log w) + \norm{M^\natural\log w}_{BLO}}\\
& \leq \normp{Mw}{\infty}\normp{w}{\infty}e^{\log Mw + \norm{M^\natural\log w}_{BLO}}\\
& = \normp{Mw}{\infty}\normp{w}{\infty}e^{\lVert M^\natural\log w\rVert_{BLO}}Mw.
\end{align*}
By the assumption, we conclude that $Mw\in A_1$ with $$\normp{Mw}{1} \leq \normp{Mw}{\infty}\normp{w}{\infty}C(C_d)e^{\norm{\log w}_{BMO}}.$$
\end{proof}

\section{Two characterizations}\label{sec:chara}

This section takes us back to the characterization of the spaces $A_1$ and $RH_\infty$. Theorems \ref{thm:Aside} and \ref{thm:RHside} bring \cite[Theorem 3.1 and 3.2]{MR2407089}, respectively, into the metric setting. The proofs are virtually the same as in the Euclidean case. On the reverse Hölder side (Theorem \ref{thm:RHside}), we require the annular decay property in order to access the $A_\infty$ condition. On the contrary, the characterization of $A_1$ (Theorem \ref{thm:Aside}) can be generalized as is. 
\begin{theorem} \label{thm:Aside}
Let $X$ be a metric space with a doubling measure, and $w$ a weight. Then $w\in A_1(X)$ if and only if $w\in A_\infty(X)$ and $\log w\in BLO(X)$. Furthermore,
$$ \exp\left(\norm{\log w}_{BLO}\right) \leq \normp{w}{1} \leq \normp{w}{\infty}\exp\left(\norm{\log w}_{BLO}\right).$$
\end{theorem}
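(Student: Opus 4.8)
The plan is to obtain both directions of the equivalence almost for free from the commutation Lemma \ref{lem:commu}, the characterization Lemma \ref{lem:chara}, and the already-established Lemma \ref{lem:logmaps}. The organizing observation is that for a nonnegative weight one has $Mw = M^\natural w$, so the entire argument can be phrased on the natural-maximal side, where the commutation lemma is available. I expect each of the two quantitative bounds on $\normp{w}{1}$ to drop out of one of the two implications.

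For the forward implication, suppose $w\in A_1(X)$. To see that $w\in A_\infty(X)$, I would use that $\log w\geq \log\bigl(\essinf_B w\bigr)$ almost everywhere on any ball $B$, so that $\exp\bigl(-\dashint_B \log w\dd\mu\bigr)\leq \bigl(\essinf_B w\bigr)^{-1}$; combined with the $A_1$ inequality \eqref{dfn:A1} this yields $\bigl(\dashint_B w\dd\mu\bigr)\exp\bigl(-\dashint_B \log w\dd\mu\bigr)\leq \normp{w}{1}$ for every $B$, whence $\normp{w}{\infty}\leq \normp{w}{1}$. That $\log w\in BLO(X)$ is precisely Lemma \ref{lem:logmaps}, whose proof in fact produces $\norm{\log w}_{BLO}\leq \log\normp{w}{1}$, i.e. the lower bound $\exp\bigl(\norm{\log w}_{BLO}\bigr)\leq \normp{w}{1}$.

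For the converse, suppose $w\in A_\infty(X)$ and $\log w\in BLO(X)$. Applying Lemma \ref{lem:commu} to $M^\natural$ gives $\log M^\natural w\leq \log\normp{w}{\infty} + M^\natural(\log w)$, that is, $M^\natural w\leq \normp{w}{\infty}\exp\bigl(M^\natural \log w\bigr)$. Since $\log w\in BLO$, Lemma \ref{lem:chara} supplies $M^\natural(\log w)\leq \log w + \norm{\log w}_{BLO}$ almost everywhere. Substituting this and using $Mw = M^\natural w$, I would conclude that $Mw\leq \normp{w}{\infty}\exp\bigl(\norm{\log w}_{BLO}\bigr)\,w$ almost everywhere, which is exactly the $A_1$ condition \eqref{dfn:A1max} with $\normp{w}{1}\leq \normp{w}{\infty}\exp\bigl(\norm{\log w}_{BLO}\bigr)$, the desired upper bound.

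I do not anticipate a genuine obstacle: the analytic content has been front-loaded into Lemmas \ref{lem:commu}, \ref{lem:chara}, and \ref{lem:logmaps}, and what remains is to exponentiate $BLO$ estimates into pointwise weight inequalities. The only points needing a little care are the degenerate case $\essinf_B w = 0$ (harmless, since then \eqref{dfn:A1} forces $w$ to vanish almost everywhere on $B$) and bookkeeping of constants so that the two implications assemble into the stated two-sided estimate on $\normp{w}{1}$.
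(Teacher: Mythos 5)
Your proposal is correct and follows essentially the same route as the paper: the converse direction combines Lemma \ref{lem:commu} and Lemma \ref{lem:chara} exactly as the paper does (via $Mw = M^\natural w$ for nonnegative $w$), and your forward direction is the paper's argument in slightly different packaging, since Lemma \ref{lem:logmaps} is proved by the same Jensen-plus-$A_1$ computation that the paper carries out inline, while your explicit verification that $\normp{w}{\infty}\leq\normp{w}{1}$ fills in a step the paper merely asserts as well known. The constants you track match the stated two-sided bound, so nothing is missing.
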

\begin{proof}
Assume first that $w\in A_1$. This direction is well known, the proof in $\mathbb{R}^n$ being the same; see e.g. \cite[Theorem II.3.3]{MR807149}. We denote $\log w = \varphi$. Since $w\in A_1$, it also belongs to $A_\infty$. Then, by definition, for almost every $x\in B\subset X$ we have $\dashint_B \exp\varphi \leq \normp{w}{1}\exp\varphi$. In other words,
\begin{gather*}
\esssup_{x\in B} e^{-\varphi(x)}\dashint_B e^{\varphi} \dd\mu \leq \normp{w}{1}\\
e^{-\essinf_B\varphi(x)}\dashint_B e^{\varphi} \dd\mu \leq \normp{w}{1}.
\end{gather*}
Jensen's inequality gives $\exp\left(\dashint_B \varphi\right) \leq \dashint_B \exp(\varphi)$, so
$$\exp\left(-\essinf_{x\in B}\varphi(x)+\dashint_B\varphi \right) \leq \normp{w}{1},
$$
which means that $\dashint_B\varphi - \essinf_B\varphi \leq \log\normp{w}{1}$, that is, $\log w\in BLO$ with 
$\norm{\log w}_{BLO} \leq \log\normp{w}{1}.$

Conversely, assume that $w\in A_\infty$ and $\log w\in BLO$. Lemmas \ref{lem:chara} and \ref{lem:commu}, respectively, state that almost everywhere
\begin{gather*}
M^\natural\log w \leq \log w + \norm{\log w}_{BLO},\text{ and}\\
\log M^\natural w - \log\normp{w}{1} \leq M^\natural\log w.
\end{gather*}
Combining these gives for almost every $x\in X$ the inequality 
\begin{gather*}
\log M^\natural w  \leq \log w + \norm{\log w}_{BLO} + \log\normp{w}{\infty}\\
Mw(x) \leq  \left(\normp{w}{\infty}e^{\norm{\log w}_{BLO}}\right)w(x),
\end{gather*}
implying that $w\in A_1$ with
$ \normp{w}{1} \leq \normp{w}{\infty}\exp\left(\norm{\log w}_{BLO}\right).$ 
Moreover, the case of constant weights shows the bound to be sharp. 
\end{proof}

\begin{theorem}\label{thm:RHside} 
Let $X$ be a metric space with a doubling measure and satisfying an annular decay property, and $w$ a weight. Then 
$w\in A_\infty(X)\cap RH_\infty(X)$ if and only if $\log w\in BUO(X)$ with 
$$ C \leq \exp(\norm{\log w}_{BUO}) \leq C\normp{w}{\infty},$$ 
where $C$ is the infimal constant in the $RH_\infty$ condition \eqref{RHinfty} for $w$.
\end{theorem}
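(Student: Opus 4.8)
The plan is to mirror the proof of Theorem \ref{thm:Aside}, exploiting the symmetry between the pairs $(M^\natural, BLO)$ and $(m^\natural, BUO)$; as in \cite{MR2407089}, each implication should reduce to arithmetic with the logarithm by way of the commutation Lemma \ref{lem:commu} and the characterization Lemma \ref{lem:chara}. I would treat the two directions separately, extracting the upper bound $\exp(\norm{\log w}_{BUO})\leq C\normp{w}{\infty}$ from the forward implication and the lower bound $C\leq\exp(\norm{\log w}_{BUO})$ from the converse.

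For the forward direction, assume $w\in A_\infty\cap RH_\infty$. I would write the $RH_\infty$ condition \eqref{RHinfty} in minimal-function form, $w\leq C m^\natural w$ almost everywhere (recall that $mw=m^\natural w$ for $w\geq 0$), take logarithms, and apply the commutation Lemma \ref{lem:commu} in the form $\log m^\natural w\leq\log\normp{w}{\infty}+m^\natural\log w$. This yields $\log w-m^\natural\log w\leq\log(C\normp{w}{\infty})$ pointwise almost everywhere, and since this quantity is nonnegative, Lemma \ref{lem:chara} identifies its essential supremum as $\norm{\log w}_{BUO}$, giving $\exp(\norm{\log w}_{BUO})\leq C\normp{w}{\infty}$.

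For the converse, assume $\log w\in BUO$. Here the commutation lemma is unavailable, since it presupposes $w\in A_\infty$, so I would argue elementarily: the definition of $BUO$ gives $\esssup_B\log w-\dashint_B\log w\leq\norm{\log w}_{BUO}$ for every ball $B$, whence exponentiating and applying Jensen's inequality $\exp(\dashint_B\log w)\leq\dashint_B w$ produces $\esssup_B w\leq\exp(\norm{\log w}_{BUO})\dashint_B w$. This is exactly the $RH_\infty$ condition with constant $\exp(\norm{\log w}_{BUO})$, so the infimal constant obeys $C\leq\exp(\norm{\log w}_{BUO})$, the desired lower bound. It then remains to recover $w\in A_\infty$: since $RH_\infty(X)\subset RH_s(X)$ for every $s>1$ (one bounds the $s$-average by the essential supremum), the annular decay property lets me invoke the equivalence of the reverse Hölder and $A_\infty$ conditions \cite[Theorem 3.1]{MR3265363} to conclude $w\in A_\infty$.

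The main obstacle is precisely this last step. Every manipulation on the logarithmic side is symmetric to the $A_1$ case and essentially routine, but passing from $RH_\infty$, which is an upper bound only, to the two-sided control inherent in $A_\infty$ genuinely requires a geometric hypothesis on $X$; this is where the annular decay property enters and cannot be dispensed with by the present method, consistent with the discussion in Section \ref{preli}. I would also keep an eye on the mild technical point that $\log w$ be genuinely locally integrable, which is guaranteed once $w\in A_\infty$.
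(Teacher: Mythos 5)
Your proposal is correct and matches the paper's proof in all essentials: both directions reduce to Jensen's inequality plus the $BUO$ characterization, the only cosmetic difference being that you package the forward direction through the commutation Lemma \ref{lem:commu} and the minimal function where the paper combines \eqref{revJensen} and \eqref{RHinfty} directly on each ball (and vice versa in the converse). Your route to $w\in A_\infty$ via $RH_\infty\subset\bigcap_{s>1}RH_s$ and the annular-decay equivalence $\bigcup_{s>1}RH_s=A_\infty$ is exactly the chain of inclusions \eqref{RHin} used in the paper.
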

\begin{remark}Assuming an annular decay property, it in fact holds that $A_\infty\cap RH_\infty = RH_\infty$; see the proof immediately below. We keep the redundant statement in order to illustrate the significance of the extra assumption.
\end{remark}
\begin{proof}[Proof of Theorem \ref{thm:RHside} ]
Assume first that $\log w\in BUO$. The characterization of $BUO$ (Lemma \ref{lem:chara}) gives
$$ m^\natural \log w \geq \log w - \norm{\log w}_{BUO} \quad\text{a.e. }x\in X, $$
while $\log m^\natural w \geq m^\natural \log w $ almost everywhere by Jensen's inequality. Hence
\begin{gather*}
\log m^\natural w \geq \log w - \norm{\log w}_{BUO}\\
mw(x) \geq \exp\left(\norm{\log w}_{BUO}\right)w(x)
\end{gather*}
for almost every $x\in X$, that is, $w\in RH_\infty$ with constant $C \leq \exp(\norm{\log w}_{BUO})$, as desired. This also implies that $w\in A_\infty$ because
\begin{equation}\label{RHin}
RH_\infty \subsetneq \bigcap_{s>1} RH_s \subset \bigcup_{s>1} RH_s = A_\infty.
\end{equation}
The first inclusion is \cite[Theorem 4.1]{MR1308005} (in $\mathbb{R}^n$, but the proof in metric spaces is the same.) The final equality is valid under the annular decay assumption; see \cite{MR3265363} and the discussion in Section \ref{preli}.

Say, then, that $w\in A_\infty\cap RH_\infty$. Combining the definitions \eqref{revJensen} and \eqref{RHinfty}, we obtain for almost every $x$ in an arbitrary ball $B\subset X$
\begin{gather*}
\normp{w}{\infty} \exp\left(\dashint_B\log w\dd\mu\right) \geq \dashint_B w \dd\mu \geq C^{-1}w\\
\log\normp{w}{\infty} + \dashint_B\log w \dd\mu \geq \log w -\log C\\
\log\normp{w}{\infty} + \log C \geq \log w - \dashint_B\log w \dd\mu.
\end{gather*}
Taking the essential supremum over points $x\in B$, we have
$$ \esssup_B\log w - \dashint_B\log w \leq \log\left(C\normp{w}{\infty}\right),$$
which is the $BUO$ condition for $\log w$ with $\norm
{\log w}_{BUO}\leq \log\left(C\normp{w}{\infty}\right)$.
\end{proof}

\section{Applications to Muckenhoupt weights}\label{sec:appli}

Using the results of the previous section, various properties of the spaces $A_1$ and $RH_\infty$ follow all but immediately. This section adapts most of \cite[Section 4]{MR2407089} into the metric setting, culminating in the refined Jones factorization Theorem \ref{thm:jones}. Propositions 4.2 and 4.4 in \cite{MR2407089} have been omitted in favour of the direct and quantitative Lemma \ref{lem:betterthan} above. The reader may note that the annular decay assumption appears where needed.

The following lemma (on $\mathbb{R}$) was shown in \cite[Theorem 2.2]{MR990859}. 
\begin{propo}
Let $X$ be a metric space with a doubling measure, and $w$ a weight. If $w\in A_\infty(X)$ and there exists $s>0$ such that $w^s\in A_1(X)$, then $w\in A_1(X)$. 
\end{propo}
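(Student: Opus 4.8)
The plan is to reduce the statement to arithmetic on the oscillation side using the characterization of $A_1$ from Theorem \ref{thm:Aside}. The hypotheses are that $w\in A_\infty$ and that $w^s\in A_1$ for some $s>0$. By Theorem \ref{thm:Aside}, the condition $w^s\in A_1$ is equivalent to $w^s\in A_\infty$ together with $\log(w^s)=s\log w\in BLO$. The conclusion we want, again by Theorem \ref{thm:Aside}, is that $w\in A_\infty$ (which we already have by hypothesis) and $\log w\in BLO$. So the entire content of the proposition collapses to a single implication about the oscillation space: \emph{if $s\log w\in BLO$ for some $s>0$, then $\log w\in BLO$.}

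First I would record this reduction explicitly, setting $\varphi=\log w$, so that the hypothesis $w^s\in A_1$ furnishes $s\varphi\in BLO$ and it remains only to deduce $\varphi\in BLO$. Then the key step is to observe that $BLO$ is closed under multiplication by positive scalars, and more precisely that the $BLO$ functional is positively homogeneous: from the definition,
\begin{equation*}
\norm{s\varphi}_{BLO} = \sup_{B\subset X}\left(\dashint_B s\varphi\dd\mu - \essinf_{x\in B} s\varphi(x)\right) = s\sup_{B\subset X}\left(\dashint_B \varphi\dd\mu - \essinf_{x\in B}\varphi(x)\right) = s\norm{\varphi}_{BLO},
\end{equation*}
where pulling the positive constant $s$ out of both the integral average and the essential infimum is what makes the identity work. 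Hence $\norm{\varphi}_{BLO} = s^{-1}\norm{s\varphi}_{BLO}<\infty$, so $\varphi=\log w\in BLO$. Combined with the assumption $w\in A_\infty$, Theorem \ref{thm:Aside} immediately gives $w\in A_1$, which is exactly the conclusion.

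I expect no serious obstacle here; the proposition is a direct illustration of the logarithmic formalism advertised in the introduction, and the real work has already been done in Theorem \ref{thm:Aside}. The only point demanding the slightest care is the homogeneity computation above, where one must use that $s>0$ so that $\essinf$ is preserved (rather than turned into $\esssup$) under multiplication by $s$; for $s\leq 0$ the statement would be false, which is consistent with the hypothesis $s>0$. It is also worth noting that the hypothesis $w\in A_\infty$ is not redundant in general metric spaces: without an annular decay property one cannot automatically recover $w\in A_\infty$ from $w^s\in A_\infty$, and this is precisely why the proposition carries $w\in A_\infty$ as a standing assumption rather than deriving it.
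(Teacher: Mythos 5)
Your proposal is correct and follows exactly the paper's argument: reduce via Theorem \ref{thm:Aside} to the claim that $s\log w\in BLO$ implies $\log w\in BLO$, and conclude by the positive homogeneity of the $BLO$ functional. The only difference is that you spell out the homogeneity computation that the paper merely asserts as ``$BLO$ is closed under multiplication by positive scalars,'' which is a harmless (and slightly more careful) elaboration.
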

\begin{proof}
By Theorem \ref{thm:Aside}, $w^s\in A_1$ implies that $\log w^s = s\log w\in BLO$. Since $BLO$ is closed under multiplication by positive scalars, $\log w\in BLO$ as well.
\end{proof}

The next proposition is the metric-space version of \cite[Theorem 4.2]{MR1308005}.
\begin{propo} Let $X$ be a metric space with a doubling measure and satisfying an annular decay property, and $w$ a weight. The following statements are equivalent. 
\begin{enumerate}[label=\normalfont{(\roman*)}]
\item\label{i} $w\in RH_\infty(X)$;
\item\label{ii} $w^{s_0}\in RH_\infty(X)$ for some $s_0>0$;
\item\label{iii} $w^s\in RH_\infty(X)$ for all $s>0$.
\end{enumerate}
\end{propo}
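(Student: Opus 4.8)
The plan is to prove this exactly as the author proved the preceding proposition, by passing to the logarithmic side via Theorem~\ref{thm:RHside} and exploiting that $BUO$ is a cone (closed under multiplication by positive scalars). The annular decay hypothesis is precisely what makes Theorem~\ref{thm:RHside} available, so under that assumption membership in $RH_\infty$ is equivalent to $\log w\in BUO$.

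First I would record the key reduction: by Theorem~\ref{thm:RHside}, for a weight $w$ we have $w\in RH_\infty(X)$ if and only if $\log w\in BUO(X)$. (As the remark after that theorem notes, under annular decay $A_\infty\cap RH_\infty=RH_\infty$, so the $A_\infty$ clause is automatic and does not need to be tracked separately.) Applying this to the power $w^s$ and using the homogeneity $\log(w^s)=s\log w$, I get that $w^s\in RH_\infty(X)$ if and only if $s\log w\in BUO(X)$.

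The three implications are then immediate from the fact that $BUO$ is closed under multiplication by positive scalars, with $\norm{s\,f}_{BUO}=s\norm{f}_{BUO}$ for $s>0$. For \ref{i}$\Rightarrow$\ref{iii}: if $w\in RH_\infty$ then $\log w\in BUO$, hence $s\log w\in BUO$ for every $s>0$, so $w^s\in RH_\infty$ for all $s>0$. The implication \ref{iii}$\Rightarrow$\ref{ii} is trivial (specialize to any fixed $s_0>0$). For \ref{ii}$\Rightarrow$\ref{i}: if $w^{s_0}\in RH_\infty$ for some $s_0>0$, then $s_0\log w\in BUO$, and multiplying by the positive scalar $1/s_0$ gives $\log w\in BUO$, whence $w\in RH_\infty$. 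This closes the cycle of equivalences.

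There is no real obstacle here; the only point requiring care is that the whole argument rests on the equivalence in Theorem~\ref{thm:RHside}, which in turn requires the annular decay property to pass between the $RH_\infty$ and $A_\infty$/$BUO$ descriptions—hence the hypothesis in the statement. I would make sure to invoke Theorem~\ref{thm:RHside} explicitly rather than re-deriving the $BUO$ characterization, keeping the proof as short as the analogous one for the $A_1$ power statement above. The scalar $s$ is assumed positive throughout, so no issues arise with the monotonicity of $t\mapsto t^s$ or with the sign of $\log w$.
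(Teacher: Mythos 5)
Your proposal is correct and matches the paper's own proof in all essentials: both reduce membership in $RH_\infty$ to $\log w\in BUO$ via Theorem~\ref{thm:RHside} (using that annular decay gives $A_\infty\cap RH_\infty=RH_\infty$) and then invoke closure of $BUO$ under multiplication by positive scalars. The only cosmetic difference is the ordering of implications — the paper proves \ref{ii}~$\Rightarrow$~\ref{iii} and notes the rest follow by taking $s_0=1$, while you run the cycle \ref{i}~$\Rightarrow$~\ref{iii}~$\Rightarrow$~\ref{ii}~$\Rightarrow$~\ref{i} — which is immaterial.
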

\begin{proof}
The implication \ref{ii} $\Rightarrow$ \ref{iii} follows from the characterization in Theorem \ref{thm:RHside}. Recall that under the annular decay assumption $RH_\infty \subset A_\infty$, so $w\in A_\infty$. Now $w^{s_0}\in RH_\infty$ means that 
$$\log w^{s_0} = s_0 \log w\in BUO.$$ 
Since $BUO$ is closed under multiplication by positive scalars, \ref{iii} follows. The other implications are immediate because we may choose $s_0=1$. 
\end{proof}

A function $\varphi$ such that $\varphi w\in RH_\infty$ for all $w\in RH_\infty$ is called a \emph{multiplier} of $RH_\infty$. 
\begin{propo}\label{propo:multinfty} Let $X$ be a metric space with a doubling measure and satisfying an annular decay property. 
A function $\varphi\in A_\infty(X)$ is a multiplier of $RH_\infty(X)$ if and only if $\varphi\in RH_\infty(X)$.  
\end{propo}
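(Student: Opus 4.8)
The plan is to push the entire statement through the logarithm, using the characterization of $RH_\infty$ supplied by Theorem \ref{thm:RHside}. Under the annular decay assumption, the remark following that theorem gives $A_\infty \cap RH_\infty = RH_\infty$, so Theorem \ref{thm:RHside} reduces to the clean equivalence $w \in RH_\infty(X)$ if and only if $\log w \in BUO(X)$. Since $w = e^\psi \in RH_\infty$ for every $\psi \in BUO$, the logarithm sets up a correspondence under which multiplication of weights becomes addition of their logarithms, and the multiplier condition ``$\varphi w \in RH_\infty$ for all $w \in RH_\infty$'' translates into ``$\log\varphi + \psi \in BUO$ for every $\psi \in BUO$.''

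First I would record that $BUO$ is closed under addition, which is immediate from the subadditivity of the essential supremum: for any ball $B\subset X$,
$$\esssup_B(f+g) - \dashint_B(f+g)\dd\mu \leq \left(\esssup_B f - \dashint_B f\dd\mu\right) + \left(\esssup_B g - \dashint_B g\dd\mu\right),$$
so $\norm{f+g}_{BUO} \leq \norm{f}_{BUO} + \norm{g}_{BUO}$. This makes $BUO$ a vector space, whence the condition ``$\log\varphi + \psi \in BUO$ for all $\psi \in BUO$'' is equivalent to $\log\varphi \in BUO$ alone: the forward implication follows by taking $\psi = 0$, the reverse by closure under addition. Applying Theorem \ref{thm:RHside} once more turns $\log\varphi \in BUO$ into $\varphi \in RH_\infty$, completing the chain of equivalences.

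Concretely, for the ``if'' direction I would take $\varphi \in RH_\infty$ and an arbitrary $w \in RH_\infty$, observe that $\log(\varphi w) = \log\varphi + \log w$ is a sum of two $BUO$ functions and hence lies in $BUO$, so $\varphi w \in RH_\infty$ and $\varphi$ is a multiplier. For the ``only if'' direction it suffices to test the multiplier property against the constant weight $w \equiv 1 \in RH_\infty$, which forces $\varphi = \varphi\cdot 1 \in RH_\infty$.

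Because the argument is this direct, I do not anticipate a genuine obstacle; the only points needing care are the vector-space property of $BUO$ and the applicability of Theorem \ref{thm:RHside}, for which the hypothesis $\varphi \in A_\infty$ is invoked (and is in any case automatic under annular decay, since $RH_\infty \subset A_\infty$ by \eqref{RHin}).
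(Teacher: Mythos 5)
Your proof is correct and follows essentially the same route as the paper: the trivial direction by testing against $w\equiv 1$, and the substantive direction by passing to logarithms via Theorem \ref{thm:RHside} (using \eqref{RHin} to get $w\in A_\infty$ under annular decay) and using closure of $BUO$ under addition. The only difference is that you spell out the subadditivity argument for $\norm{\cdot}_{BUO}$, which the paper takes for granted.
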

\begin{proof}
If $\varphi$ is a multiplier of $RH_\infty$, then $\varphi = 1\cdot\varphi$ belongs to $RH_\infty$ because $w\equiv 1$ does. Assume now that $\varphi\in A_\infty\cap RH_\infty$, and let $w$ be a function in $RH_\infty$. Recall that according to \eqref{RHin} we have $w\in A_\infty$. Consequently, Theorem \ref{thm:RHside} allows us to write 
\begin{gather*}
\varphi = \exp\left(\log\varphi\right) = \exp f,\\
w = \exp\left(\log w\right) = \exp g,
\end{gather*}
where $f, g\in BUO$. Because $BUO$ is closed under vector addition, we have $w\varphi = \exp(f+g)\in BUO$.
\end{proof}

In \cite[Theorem 4.10]{MR1308005}, Cruz-Uribe and Neugebauer characterize the set of multipliers of $A_1$ as those nonnegative, locally integrable functions $\varphi$ for which 
$$\frac{1}{\varphi}\in RH_\infty\cap \bigcap_{p>1}A_p.$$
The proof relies on the following result, which is their Theorem 4.4. The multipliers of $A_1$ have previously been discussed in \cite[Theorem 2.14]{MR990859}; see also \cite{MR3130552}.
\begin{propo}\label{propo:mult} Let $X$ be a metric space with a doubling measure and satisfying an annular decay property, and $p>1$. A weight $w\in A_1(X)$ if and only if $w^{1-p}\in A_p(X)\cap RH_\infty(X)$. 
\end{propo}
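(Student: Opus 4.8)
The plan is to reduce both membership conditions to statements about the single function $\log w$ using the logarithmic characterizations in Theorems \ref{thm:Aside} and \ref{thm:RHside}, and to recover the $A_\infty$ information that $RH_\infty$ alone does not see by exploiting the elementary self-duality of the $A_p$ classes. Throughout I would write $u = w^{1-p}$ and $p' = p/(p-1)$, and rely on the two algebraic identities $\log u = (1-p)\log w$ and $u^{-1/(p-1)} = w$. The second identity is precisely the self-duality of the $A_p$ condition: substituting it into \eqref{dfn:Ap} shows $\normp{u}{p} = \normp{w}{p'}^{\,p-1}$, so that $u\in A_p(X)$ if and only if $w\in A_{p'}(X)$. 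This equivalence is purely arithmetic and needs no structural assumption on $X$.

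For the forward direction I would start from $w\in A_1(X)$, which by Theorem \ref{thm:Aside} gives simultaneously $w\in A_\infty(X)$ and $\log w\in BLO(X)$. Using $\norm{-f}_{BUO}=\norm{f}_{BLO}$ together with the closure of $BUO$ under multiplication by the positive scalar $p-1$, the identity $\log u = -(p-1)\log w$ places $\log u\in BUO(X)$, whence Theorem \ref{thm:RHside} yields $u\in RH_\infty(X)$. Since moreover $w\in A_1(X)\subset A_{p'}(X)$, the duality above gives $u\in A_p(X)$, completing this implication.

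For the converse I would assume $u\in A_p(X)\cap RH_\infty(X)$. The $A_p$ membership of $u$ gives, by the same duality, $w\in A_{p'}(X)\subset A_\infty(X)$. For the oscillation, under the annular decay hypothesis $RH_\infty\subset A_\infty$, so that $u\in RH_\infty(X)$ and Theorem \ref{thm:RHside} furnish $\log u\in BUO(X)$; dividing by the negative scalar $1-p$ and again invoking $\norm{-f}_{BUO}=\norm{f}_{BLO}$ returns $\log w\in BLO(X)$. Having both $w\in A_\infty(X)$ and $\log w\in BLO(X)$ in hand, Theorem \ref{thm:Aside} concludes $w\in A_1(X)$.

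The step I expect to be the crux is the $A_\infty$ bookkeeping in the converse: the reverse Hölder condition on $u$ controls only the oscillation of $\log w$, i.e. the $BLO$ part, and cannot by itself certify that $w$ lies in $A_\infty(X)$, which is an ingredient Theorem \ref{thm:Aside} genuinely requires. It is precisely the $A_p$ membership of $u$, read through duality as $w\in A_{p'}(X)$, that supplies this. The annular decay property then enters only to guarantee $RH_\infty\subset A_\infty$ and thereby license Theorem \ref{thm:RHside}; everything else is the arithmetic with exponents afforded by the logarithmic formalism.
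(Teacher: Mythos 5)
Your proposal is correct and follows essentially the same route as the paper: the forward direction via Theorem \ref{thm:Aside}, scalar multiplication in $BLO$/$BUO$, and Theorem \ref{thm:RHside}; the converse via the duality $w^{1-p}\in A_p \Leftrightarrow w\in A_{p'}$ to secure $w\in A_\infty$, plus the $BUO$-to-$BLO$ sign flip. Your explicit remark that the annular decay hypothesis is what licenses the $RH_\infty\Rightarrow A_\infty$ step needed to invoke Theorem \ref{thm:RHside} in the converse is a point the paper leaves implicit, but it is the same argument.
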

\begin{proof}
If $w\in A_1 \subsetneq \bigcap_{p>1}A_p$ and $p'$ is the conjugate exponent given by 
$$\frac{1}{p}+\frac{1}{p'} = 1,$$ 
then $w\in A_{p'}$ and $w^{1-p}\in A_p$. Furthermore, by Theorem \ref{thm:Aside} we have $\log w\in BLO$, which implies 
$$\log w^{1-p} = -(p-1)\log w\in BUO.$$ 
According to the characterization Theorem \ref{thm:RHside}, this means that $w^{1-p}\in RH_\infty$.

Assume then that $w^{1-p}\in A_p\cap RH_\infty$. The $A_p$ condition implies that 
$$ w^{(1-p)(1-p')} = w\in A_{p'}\subset A_\infty.$$ 
In addition, since $w^{1-p}\in RH_\infty$, Theorem \ref{thm:RHside} states that $\log w^{1-p} \in BUO$, which in turn implies that 
$$ (1-p')\log w^{(1-p)} = \log w^{(1-p)(1-p')} = w\in BLO, $$
because $1-p'<0$. Altogether we find that $w\in A_\infty$ and $\log w\in BLO$, which is equivalent to $w\in A_1$ by Theorem \ref{thm:Aside}. 
\end{proof}

Finally, we apply Propositions \ref{propo:mult} -- \ref{propo:s} to show a refined Jones factorization theorem \cite[Theorem 5.1]{MR1308005} that includes information on reverse Hölder classes. With the preliminary results in place, the proof is identical to \cite[Theorem 4.9]{MR2407089}. 

For reference, we state the following lemma from \cite{MR1018575}, where it is labelled preliminary result {P6}. 
The proof is a straightforward application of the definitions of the function spaces involved.
\begin{propo}\label{propo:s}
Let $1<p,s<\infty$. The weight $w\in A_p(X)\cap RH_s(X)$ if and only if $w^s\in A_{s(p-1)+1}(X).$
\end{propo}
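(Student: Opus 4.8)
The plan is to unwind both the $A_p$ and $RH_s$ conditions for $w$ into a single $A_q$ condition for the power weight $w^s$, where $q = s(p-1)+1$. First I would write out explicitly what membership in $A_p(X)\cap RH_s(X)$ means: the $A_p$ inequality \eqref{dfn:Ap} controls the product $\left(\dashint_B w\dd\mu\right)\left(\dashint_B w^{-1/(p-1)}\dd\mu\right)^{p-1}$, while the $RH_s$ inequality \eqref{RHp} controls $\left(\dashint_B w^s\dd\mu\right)^{1/s}$ by $\dashint_B w\dd\mu$. On the other side, the $A_q$ condition for $w^s$ with $q = s(p-1)+1$ controls $\left(\dashint_B w^s\dd\mu\right)\left(\dashint_B (w^s)^{-1/(q-1)}\dd\mu\right)^{q-1}$. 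The crucial arithmetic observation is that the conjugate exponent collapses nicely: since $q-1 = s(p-1)$, we have $-\frac{s}{q-1} = -\frac{1}{p-1}$, so the second factor in the $A_q$ condition becomes $\left(\dashint_B w^{-1/(p-1)}\dd\mu\right)^{s(p-1)}$, which is exactly the $(p-1)$-power factor of the $A_p$ condition raised to the $s$-th power.

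The forward direction then proceeds by combining estimates. Starting from the $A_q$ product for $w^s$, I would replace $\dashint_B w^s\dd\mu$ using the $RH_s$ inequality by $\left(\dashint_B w\dd\mu\right)^s$ (up to the $RH_s$ constant), and replace the reverse factor by the identity above. This yields
\begin{equation*}
\normp{w^s}{q} \leq C\sup_{B\subset X}\left(\dashint_B w\dd\mu\right)^s\left(\dashint_B w^{-1/(p-1)}\dd\mu\right)^{s(p-1)} = C\,\normp{w}{p}^{\,s},
\end{equation*}
so finiteness of the $A_p$ and $RH_s$ constants gives finiteness of $\normp{w^s}{q}$.

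For the converse I would argue in two stages. Assuming $w^s\in A_q$, I first recover the $A_p$ condition for $w$: applying Jensen's inequality in the form $\dashint_B w\dd\mu \leq \left(\dashint_B w^s\dd\mu\right)^{1/s}$ and using the exponent identity $-\frac{1}{q-1} = -\frac{s}{q-1}\cdot\frac{1}{s}$ to relate the reverse factors, the $A_q$ bound for $w^s$ directly implies a uniform bound on the $A_p$ product for $w$. To recover $RH_s$, I would use that $w^s\in A_q \subset A_\infty$ and then—invoking the annular decay hypothesis—pass to a reverse Hölder inequality for $w^s$; more directly, the $A_q$ condition for $w^s$ together with Jensen's inequality bounds $\left(\dashint_B w^s\dd\mu\right)^{1/s}$ in terms of $\left(\dashint_B w^{-1/(p-1)}\dd\mu\right)^{-(p-1)} \leq \dashint_B w\dd\mu$, which is precisely \eqref{RHp}.

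The main obstacle is bookkeeping the exponents correctly and verifying that the several applications of Jensen's inequality point in mutually consistent directions; in particular the converse is where one must be careful, since extracting \emph{both} the $A_p$ factor and the reverse Hölder inequality from the single scalar $A_q$ bound requires that the Jensen estimates not be wasteful. Because the statement is purely a pointwise-per-ball algebraic identity among integral averages, no genuinely new geometric input beyond the definitions is needed for the core equivalence; the annular decay assumption enters only through the background fact (used implicitly above) that the $A_\infty$ and reverse Hölder descriptions coincide, and I would flag that this is the sole place the hypothesis on $X$ is invoked.
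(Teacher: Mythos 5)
Your proposal is correct and is exactly the ``straightforward application of the definitions'' that the paper alludes to (it gives no proof of its own, deferring to Johnson--Neugebauer's preliminary result P6): the whole content is the exponent identity $q-1=s(p-1)$, hence $(w^s)^{-1/(q-1)}=w^{-1/(p-1)}$, after which the forward direction is $RH_s$ applied to the first factor, and the converse recovers $A_p$ via $\dashint_B w\dd\mu\leq\left(\dashint_B w^s\dd\mu\right)^{1/s}$ and recovers $RH_s$ via the Hölder--Jensen bound $\left(\dashint_B w^{-1/(p-1)}\dd\mu\right)^{-(p-1)}\leq\dashint_B w\dd\mu$, precisely as you describe. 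One correction to your final paragraph: the annular decay hypothesis is not used even implicitly --- your direct argument is purely definitional and needs no geometric input at all, which is consistent with the paper stating this proposition without any assumption on $X$ beyond the standing doubling hypothesis; the alternative route you sketch through $w^s\in A_q\subset A_\infty$ should simply be discarded, since a reverse Hölder inequality for $w^s$ with an unspecified exponent would not yield $RH_s$ for $w$ anyway.
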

Furthermore, we are going to need the classical Jones factorization theorem in metric spaces. For a proof see \cite[Proposition 2.9]{kurki_mudarra_ext} and let $E=X$; this nonstandard version factorizes a weight $w$ such that $w^r\in A_p$ with $r>1$, but letting $r=1$ only simplifies the proof. 
\begin{theorem}\label{thm:jones}
Let $X$ be a metric space with a doubling measure and satisfying an annular decay property, and $1<p,s<\infty$. The weight $w\in A_p(X)\cap RH_s(X)$ if and only if 
$$w=w_1w_2$$ 
for some $w_1\in A_1(X)\cap RH_s(X)$, $w_2\in A_p(X)\cap RH_\infty(X)$. 
\end{theorem}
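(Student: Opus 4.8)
The plan is to reduce the refined factorization to the \emph{classical} Jones factorization theorem applied to the single class $A_q$ with $q = s(p-1)+1$, shuttling back and forth via the power relations of Propositions~\ref{propo:mult} and~\ref{propo:s}. Recall that the classical theorem, available here through \cite[Proposition~2.9]{kurki_mudarra_ext} with $E=X$, states that $f\in A_q(X)$ if and only if $f = u_1 u_2^{1-q}$ for some $u_1, u_2\in A_1(X)$. Everything else is exponent bookkeeping, hinging on the identity $(1-q)/s = 1-p$, equivalently $s(1-p) = 1-q$.

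Before starting I would record the $p=1$ companion of Proposition~\ref{propo:s}, namely that $u\in A_1(X)$ if and only if $u^{1/s}\in A_1(X)\cap RH_s(X)$. This case is not literally covered by Proposition~\ref{propo:s}, which assumes $p>1$, but it follows in a couple of lines straight from the definitions, just as that proposition does. For the $A_1$ part one uses Jensen's inequality and monotonicity of $t\mapsto t^{1/s}$; for the $RH_s$ part one bounds $(\dashint_B u)^{1/s}$ by $\essinf_B u^{1/s}\leq \dashint_B u^{1/s}$ using the $A_1$ condition for $u$, and conversely $w_1\in A_1\cap RH_s$ gives $\dashint_B w_1^s\leq C\,\essinf_B w_1^s$ by combining the two inequalities.

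For the forward direction, suppose $w\in A_p(X)\cap RH_s(X)$. Proposition~\ref{propo:s} gives $w^s\in A_q(X)$, and the classical factorization produces $u_1, u_2\in A_1(X)$ with $w^s = u_1 u_2^{1-q}$. Taking $s$-th roots and using $(1-q)/s = 1-p$ yields $w = w_1 w_2$ with $w_1 = u_1^{1/s}$ and $w_2 = u_2^{1-p}$. The companion statement gives $w_1\in A_1(X)\cap RH_s(X)$, while Proposition~\ref{propo:mult}, applied to $u_2\in A_1$, gives $w_2 = u_2^{1-p}\in A_p(X)\cap RH_\infty(X)$, as required.

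The converse runs this chain backwards. Given $w = w_1 w_2$ with $w_1\in A_1\cap RH_s$ and $w_2\in A_p\cap RH_\infty$, the companion statement yields $w_1^s\in A_1$, and Proposition~\ref{propo:mult} yields $w_2 = v^{1-p}$ for some $v\in A_1$. Then $w^s = w_1^s\, v^{1-q}$ is a product of an $A_1$ weight and the $(1-q)$-th power of an $A_1$ weight, so the classical theorem places $w^s\in A_q$, and Proposition~\ref{propo:s} returns $w\in A_p\cap RH_s$. I do not expect any deep obstacle: the substance is entirely carried by the three preparatory results together with the classical theorem, and the only genuine friction is keeping the exponents consistent and disposing of the $p=1$ edge case of Proposition~\ref{propo:s}, which the companion statement handles once and for all.
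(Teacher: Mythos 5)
Your proposal is correct and follows essentially the same route as the paper's own proof: reduce to $w^s\in A_{s(p-1)+1}$ via Proposition~\ref{propo:s}, apply the classical Jones factorization from \cite[Proposition~2.9]{kurki_mudarra_ext}, and convert the two $A_1$ factors back using the $p=1$ case of Proposition~\ref{propo:s} and Proposition~\ref{propo:mult}. Your explicit verification of the $p=1$ companion statement is a welcome touch, since the paper invokes ``Proposition~\ref{propo:s} for $p=1$'' even though that proposition is stated only for $p>1$.
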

\begin{remark}The cases $p=1$ or $s=\infty$ are immediate, the latter by Proposition \ref{propo:multinfty}. 
\end{remark}
\begin{proof}[Proof of Theorem \ref{thm:jones}]
By Proposition \ref{propo:s}, $w\in A_1$ if and only if $w^s\in A_{s(p-1)+1}$. By the original Jones factorization, the statement for $w^s$ is true if and only if there exist $v_1, v_2\in A_1$ such that $w^s = v_1v_2^{-s(p-1)}$, that is, $w = v_1^{1/s}v_2^{1-p}$. Proposition \ref{propo:s} for $p=1$ states that $v_1\in A_1$ is equivalent to 
$$v_1^{1/s}\in A_1\cap RH_s.$$ 
Furthermore, by Proposition \ref{propo:mult}, $v_2\in A_1$ is equivalent to 
$$v_2^{1-p}\in A_p\cap RH_\infty.$$ 
Choosing $w_1 = v_1^{1/s}$ and $w_2 = v_2^{1-p}$, the statement is proven. 
\end{proof}
\begin{bibdiv}
\begin{biblist}

\bib{MR660603}{article}{
   author={Bennett, C.},
   title={Another characterization of {BLO}},
   journal={Proc. Amer. Math. Soc.},
   volume={85},
   date={1982},
   number={4},
   pages={552--556},
   issn={0002-9939},
}

\bib{MR2867756}{book}{
   author={Bj\"{o}rn, A.},
   author={Bj\"{o}rn, J.},
   title={Nonlinear potential theory on metric spaces},
   series={{EMS} Tracts in Mathematics},
   volume={17},
   publisher={European Mathematical Society, Z\"{u}rich},
   date={2011},
   pages={xii+403},
   isbn={978-3-03719-099-9},
}

\bib{MR1724375}{article}{
   author={Buckley, S.~M.},
   title={Is the maximal function of a Lipschitz function continuous?},
   journal={Ann. Acad. Sci. Fenn. Math.},
   volume={24},
   date={1999},
   number={2},
   pages={519--528},
   issn={1239-629X},
}

\bib{MR565349}{article}{
   author={Coifman, R.~R.},
   author={Rochberg, R.},
   title={Another characterization of {BMO}},
   journal={Proc. Amer. Math. Soc.},
   volume={79},
   date={1980},
   number={2},
   pages={249--254},
   issn={0002-9939},
}

\bib{MR0499948}{book}{ 
   author={Coifman, R.~R.},
   author={Weiss, G.},
   title={Analyse harmonique non-commutative sur certains espaces homog\`enes: \'{e}tude de certaines int\'{e}grales singuli\`eres},
   language={French},
   series={Lecture Notes in Mathematics, Vol. 242},
   publisher={Springer-Verlag, Berlin-New York},
   date={1971},
   pages={v+160},
}

\bib{MR1488297}{article}{
   author={Colding, T.~H.},
   author={Minicozzi, W.~P.},
   title={Liouville theorems for harmonic sections and applications},
   journal={Comm. Pure Appl. Math.},
   volume={51},
   date={1998},
   number={2},
   pages={113--138},
   issn={0010-3640},
}

\bib{MR1308005}{article}{
   author={Cruz-Uribe, D.},
   author={Neugebauer, C.~J.},
   title={The structure of the reverse H\"{o}lder classes},
   journal={Trans. Amer. Math. Soc.},
   volume={347},
   date={1995},
   number={8},
   pages={2941--2960},
   issn={0002-9947},
}

\bib{MR850408}{article}{
   author={David, G.},
   author={Journ\'{e}, J.-L.},
   author={Semmes, S.},
   title={Op\'{e}rateurs de Calder\'{o}n-Zygmund, fonctions para-accr\'{e}tives et
   interpolation},
   language={French},
   journal={Rev. Mat. Iberoamericana},
   volume={1},
   date={1985},
   number={4},
   pages={1--56},
   issn={0213-2230},
}

\bib{MR1791462}{book}{ 
   author={Genebashvili, I.},
   author={Gogatishvili, A.},
   author={Kokilashvili, V.},
   author={Krbec, M.},
   title={Weight theory for integral transforms on spaces of homogeneous
   type},
   series={Pitman Monographs and Surveys in Pure and Applied Mathematics},
   volume={92},
   publisher={Longman, Harlow},
   date={1998},
   pages={xii+410},
   isbn={0-582-30295-1},
}
	
\bib{MR807149}{book}{
      author={Garc\'{\i}a-Cuerva, J.},
      author={Rubio~de Francia, J.~L.},
       title={Weighted norm inequalities and related topics},
      series={North-Holland Mathematics Studies},
   publisher={North-Holland Publishing Co., Amsterdam},
        date={1985},
      volume={116},
        ISBN={0-444-87804-1},
        note={Mathematics Studies, 104},
}

\bib{MR3310925}{article}{
   author={Indratno, W.},
   author={Maldonado, D.},
   author={Silwal, D.},
   title={A visual formalism for weights satisfying reverse inequalities},
   journal={Expo. Math.},
   volume={33},
   date={2015},
   number={1},
   pages={1--29},
   issn={0723-0869},
}

\bib{MR990859}{article}{
   author={Johnson, R.},
   author={Neugebauer, C.~J.},
   title={Homeomorphisms preserving $A_p$},
   journal={Rev. Mat. Iberoam.},
   volume={3},
   date={1987},
   number={2},
   pages={249--273},
   issn={0213-2230},
}

\bib{MR1018575}{article}{
   author={Johnson, R.},
   author={Neugebauer, C.~J.},
   title={Change of variable results for $A_p$- and reverse H\"{o}lder ${\rm RH}_r$-classes},
   journal={Trans. Amer. Math. Soc.},
   volume={328},
   date={1991},
   number={2},
   pages={639--666},
   issn={0002-9947},
}		

\bib{MR3265363}{article}{ 
   author={Kinnunen, J.},
   author={Shukla, P.},
   title={Gehring's lemma and reverse {H}\"{o}lder classes on metric measure
   spaces},
   journal={Comput. Methods Funct. Theory},
   volume={14},
   date={2014},
   number={2-3},
   pages={295--314},
   issn={1617-9447},
}

\bib{MR3130552}{article}{
   author={Kinnunen, J.},
   author={Shukla, P.},
   title={The structure of reverse H\"{o}lder classes on metric measure spaces},
   journal={Nonlinear Anal.},
   volume={95},
   date={2014},
   pages={666--675},
   issn={0362-546X},
}

\bib{kurki_mudarra_ext}{article}{
      title={On the extension of {M}uckenhoupt weights in metric spaces}, 
      author={E.–K. Kurki}, 
      author = {C. Mudarra},
      journal = {Nonlinear Analysis},
      volume = {215},
      year={2022},
      issn = {0362-546X},
}

\bib{MR1756109}{article}{
   author={Mateu, J.},
   author={Mattila, P.},
   author={Nicolau, A.},
   author={Orobitg, J.},
   title={{BMO} for nondoubling measures},
   journal={Duke Math. J.},
   volume={102},
   date={2000},
   number={3},
   pages={533--565},
   issn={0012-7094},
}

\bib{MR1840094}{article}{
   author={Ou, W.},
   title={The natural maximal operator on {BMO}},
   journal={Proc. Amer. Math. Soc.},
   volume={129},
   date={2001},
   number={10},
   pages={2919--2921},
   issn={0002-9939},
}

\bib{MR2407089}{article}{
   author={Ou, W.},
   title={Near-symmetry in $A_\infty$ and refined Jones factorization},
   journal={Proc. Amer. Math. Soc.},
   volume={136},
   date={2008},
   number={9},
   pages={3239--3245},
   issn={0002-9939},
}

\bib{MR3110585}{article}{
   author={Routin, E.},
   title={Distribution of points and Hardy type inequalities in spaces of
   homogeneous type},
   journal={J. Fourier Anal. Appl.},
   volume={19},
   date={2013},
   number={5},
   pages={877--909},
   issn={1069-5869},
}

\bib{MR3558538}{article}{
   author={Saito, H.},
   author={Tanaka, H.},
   title={General maximal operators and the reverse H\"{o}lder classes},
   journal={Ann. Acad. Sci. Fenn. Math.},
   volume={42},
   date={2017},
   number={1},
   pages={367--391},
   issn={1239-629X},
}

\bib{shukla_thesis}{thesis}{
   author={Shukla, P.},
   title={The structure of reverse H\"{o}lder classes on metric measure spaces},
   type={Doctoral dissertation},
   organization={University of Oulu},
   date={2012-10},
}	

\bib{MR1011673}{book}{ 
    AUTHOR = {Str\"{o}mberg, J.-O.},
    AUTHOR = {Torchinsky, A.},
     TITLE = {Weighted {H}ardy spaces},
    SERIES = {Lecture Notes in Mathematics},
    VOLUME = {1381},
 PUBLISHER = {Springer-Verlag, Berlin},
      YEAR = {1989},
     PAGES = {vi+193},
      ISBN = {3-540-51402-3},
}

\end{biblist}
\end{bibdiv}

\section*{Statements and Declarations}
\subsection*{Funding}
The author was supported by the Vilho, Yrjö and Kalle Väisälä Foundation of the Finnish Academy of Science and Letters.

\subsection*{Conflict of interests}
The author has no relevant financial or non-financial interests to disclose.

\subsection*{Data availability}
Data sharing not applicable to this article as no datasets were generated or analysed during the current study.

\end{document}